\documentclass{imanum}
\usepackage{graphicx}
\usepackage{color,xcolor}
\usepackage{amssymb}
\usepackage{amsmath}

\jno{drnxxx}

\def\R{\mathbb{R}}
\def\Z{\mathbb{Z}}

\def\N{\mathcal{N}}
\def\K{\mathcal{K}}

\def\L{\mathcal{L}}
\def\D{\mathcal{D}}
\def\k{\mathbb{K}}

\def\F{\mathcal{F}}
\def\O{\mathcal{O}}
\def\V{\mathcal{V}}

\def\F{\mathcal{F}}
\newcommand{\vx}{\bm{x}}
\newcommand{\vy}{\bm{y}}
\newcommand{\vk}{\bm{k}}
\newcommand{\vm}{\bm{m}}
\newcommand{\vn}{\bm{n}}
\newcommand{\vz}{\bm{z}}

\newcommand{\vxi}{\bm{\xi}}
\newcommand{\va}{\bm{a}}

\newcommand{\vg}{\bm{g}}
\newcommand{\vl}{\bm{l}}
\newcommand{\vu}{\bm{u}}
\newcommand{\vv}{\bm{v}}
\newcommand{\vs}{\bm{s}}
\newcommand{\vU}{\bm{U}}

\newcommand{\vi}{\bm{i}}

\newcommand{\beq}{\begin{eqnarray}}
\newcommand{\eeq}{\end{eqnarray}}
\newcommand{\beqs}{\begin{eqnarray*}}
\newcommand{\eeqs}{\end{eqnarray*}}

\begin{document}

\title{Stability and convergence analysis of high-order numerical schemes with DtN-type absorbing boundary conditions for nonlocal wave equations}
\shorttitle{Stability and convergence analysis for nonlocal wave equations}

\author{
\sc
Jihong Wang\thanks{Email: jhwang@zhejianglab.com}\\[2pt]
Research Center for Applied Mathematics and Machine Intelligence, Zhejiang Lab, Hangzhou 311121, China\\[6pt]
{\sc and}\\[6pt]
Jerry Zhijian Yang\thanks{Email: zjyang.math@whu.edu.cn}
and Jiwei Zhang\thanks{Corresponding author. Email: jiweizhang@whu.edu.cn} \\[2pt]
School of Mathematics and Statistics, and Hubei Key Laboratory of Computational Science,
Wuhan University, Wuhan 430072, China
}
\shortauthorlist{Wang, Yang and Zhang}

\maketitle

\begin{abstract}
{The stability and convergence analysis of high-order numerical approximations for the one- and two-dimensional nonlocal wave equations on unbounded spatial domains are considered. We first use the quadrature-based finite difference schemes to discretize the spatially nonlocal operator, and apply the explicit difference scheme to approximate the temporal derivative to achieve a fully discrete infinity system. After that, we construct the Dirichlet-to-Neumann (DtN)-type absorbing boundary conditions (ABCs) to reduce the infinite discrete system into a finite discrete system. To do so, we first adopt the idea in [Du, Zhang and Zheng, \emph{Commun. Comput. Phys.}, 24(4):1049--1072, 2018 and Du, Han, Zhang and Zheng, \emph{SIAM J. Sci. Comp.}, 40(3):A1430--A1445, 2018] to derive the Dirichlet-to-Dirichlet (DtD)-type mappings for one- and two-dimensional cases, respectively. We then use the discrete nonlocal Green's first identity to achieve the discrete DtN-type mappings from the DtD-type mappings. The resulting DtN-type mappings make it possible to perform the stability and convergence analysis of the reduced problem. Numerical experiments are provided to demonstrate the accuracy and effectiveness of the proposed approach. }
{nonlocal wave equation, artificial boundary method, absorbing boundary conditions, stability and convergence analysis, DtN-type map}
\end{abstract}

\section{Introduction}
\label{sec;introduction}

Recently, nonlocal models have received much attention in various research areas, such as the peridynamic theory of continuum mechanics \citep[see][]{silling2000reformulation}, image processing \citep[see, e.g.,][]{buades2005non,gilboa2009nonlocal, lou2010image}, biology \citep[see, e.g.,][]{painter2015nonlocal} and diffusion processes \citep[see, e.g.,][]{d2017nonlocal, ignat2007nonlocal}.
While most existing nonlocal models are formulated on bounded domains with volume constraints  \citep[see][]{emmrich2007analysis,du2013nonlocal,tian2013analysis,tian2014asymptotically,zhou2010mathematical}, the models on infinite domains are more reasonable when describing wave propagation in an exceedingly large sample  \citep[see, e.g.,][]{weckner2005the,weckner2005numerical}.
In this work, we consider the computation of the $d$-dimensional nonlocal wave equation with $d=1,2$, given as
\beq \label{NonModel}
\begin{aligned}
& \partial_t^2 u(\vx,t)+\L_{\delta} u(\vx,t) = f(\vx,t), && \vx\in \R^d,\\
& u(\vx,0) = \varphi(\vx),&& \vx\in \R^d,\\
& \partial_tu(\vx,0) = \psi(\vx),&& \vx\in \R^d,
\end{aligned}
\eeq
where the body force $f(\vx, t)$, the initial values $\varphi(\vx)$ and $\psi(\vx)$ are the given compactly supported functions, and the nonlocal operator $\L_{\delta}$ is defined as
\beq \label{L}
\L_{\delta} u(\vx) = \int_{B_{\delta}(\vx)}(u(\vx)-u(\vy))\gamma(\vx-\vy)d\vy.
\eeq
In the definition above,  {$B_{\delta}(\vx)$ is an interval ($d=1$) or a square ($d=2$) centered at $\vx$ with side length $2\delta$}, and the radial kernel function $\gamma(\bm{\alpha})$ satisfies
\beq
&&-\; \text{nonnegativity:}\; \gamma(\bm{\alpha})\ge 0,\; \bm{\alpha}\in\R^d,\\
&&-\;\text{finite horizon:} \; \gamma(\bm{\alpha})=0, ~ \bm{\alpha}\in\R^d\backslash B_{\delta}(\bm{0}).\label{ass2}
\eeq
Here horizon parameter $\delta$ is used  to measure the range of nonlocal interaction.
Moreover, if the second-order moment of the kernel satisfies
\beq
\frac{1}{2}\int_{\bm{\alpha}\in B_{\delta}(\bm{0})} \|\bm{\alpha}\|^2 \gamma(\bm{\alpha})d\bm{\alpha}=d,  \label{assump2}
\eeq
then the nonlocal operator $\L_{\delta}$ converges to the classical Laplace operator
$-\Delta$ when $\delta\rightarrow 0$ \citep[see][]{du2018nonlocal,du2019asymptotically,du2011mathematical,du2019nonlocal}.
Consequently, as $\delta\rightarrow 0$, the solution of nonlocal model \eqref{NonModel} converges to that of the following local model
\begin{eqnarray} \label{LocModel}
\begin{aligned}
& \partial_t^2 u(\vx,t) -\Delta u(\vx,t) = f(\vx,t), && \vx\in \R^d,t >0,\\
& u(\vx,0) = \varphi(\vx),&& \vx\in \R^d,\\
& \partial_tu(\vx,0) = \psi(\vx), && \vx\in \R^d.
\end{aligned}
\end{eqnarray}

Several tools have been developed to solve problems defined on the unbounded domains, such as the artificial boundary method (ABM) \citep[see][]{han2013artificial}, perfectly matched layer method \citep[see][]{berenger1994perfectly}, infinite element or boundary element method \citep[see][]{ying1980infinite,yu1993mathematical} and so on.  Among the above successful approaches, we here use the ABM to deal with the problem \eqref{NonModel}. The key ingredient of ABM is to design appropriate absorbing/artificial boundary conditions (ABCs), also called transparent or nonreflecting boundary conditions in literatures, on the artificial boundaries satisfied by the solution of the original problem, which reduce the original unbounded problem to an initial-boundary-value problem on bounded computational domains of interest. The ideal ABCs can efficiently absorb/annihilate waves on artificial boundaries, and do not produce the reflected or nonphysical waves to disrupt the waves in the computational domain.

The ABM has  {been} well studied to solve local problems on unbounded spatial domains \citep[see][]{grote1995exact, hagstrom1999radiation, lubich2002fast, teng2003exact, givoli2004high,givoli1991non}.
For local problems, the well-posedness requires values of the solution along only the boundary of considered domain $\Omega$. For nonlocal problems,  it requires values of the solution over a layer with a thickness of $\delta$ outside of $\Omega$ due to the nonlocal interactions. This brings essential difficulties to the design of ABCs for nonlocal problems, compared with local problems \citep[see][]{zhang2021}.  Recently, much effort and great progress have  been made for nonlocal problems \citep[see][]{zheng2017numerical, zhang2017artificial, du2018nonlocal, du2018numerical, yan2020numerical, zheng2020stability, shojaei2020dirichlet, ji2021artificial, ji2021accurate, wang2021stability}.
 For 1D nonlocal diffusion equations, \citet{zhang2017artificial} derive the continuous Dirichlet-to-Neumann (DtN)-type ABCs (global in time) and high-order Pad\'e approximate ABCs (local in time).
 \citet{zheng2017numerical} construct the Dirichlet-to-Dirichlet (DtD)-type ABCs using the Laplace transform in the spatial direction; Furthermore, \citet{zheng2020stability} propose the discrete DtN-type ABCs for the stability and convergence analysis and develop a fast convolution algorithm to efficiently implement the ABCs. In addition, \citet{shojaei2020dirichlet} construct the approximated Dirichlet-type ABCs derived from exponential basis functions for both 1D, 2D and 3D cases.
 For nonlocal Schr\"odinger equations, {\citet{yan2020numerical}} construct the exact ABCs using the $z$-transform for the spatially discretized 1D system. \citet{ji2021artificial} develop an
 exact boundary conditions by accurately computing the Green's functions of the semi-discrete nonlocal Schr\"odinger equations.
As for nonlocal wave equations given by \eqref{NonModel}, \citet{du2018nonlocal, du2018numerical} construct the DtD-type ABCs using the spatial Laplace transform for 1D case and the idea of integral equation method for 2D case, respectively, but the  stability and convergence analysis of the proposed schemes remains open. 

The aim of this work is to construct numerical schemes with the rigorous stability and convergence analysis for nonlocal wave equation \eqref{NonModel} on unbounded domains.
A typical procedure of solving the problem on an unbounded domain by the ABM is first to derive suitable ABCs to restrict problem on a bounded domain, then approximate the reduced initial-boundary-value problem. The resulting ABCs for the continuum model usually involve convolution operations and other complicated forms, therefore, are hard to be approximated without loss of accuracy of the whole  numerical scheme, not to mention their stability and convergence analysis. An alternative procedure is first of all to fully discretize the original problem on the unbounded domain, and then directly construct the exact discrete ABCs for the fully discrete infinite system. 

In this work, we adopt the second strategy to discretize the continuum models into infinite discrete systems over the whole space. The explicit finite difference (FD) scheme is used to approximate the  temporal derivative. And the spatially discrete schemes here we used are the quadrature-based difference schemes, which can be arbitrarily high-order. After that, we apply the DtD-type ABCs developed in \citet{du2018nonlocal, du2018numerical} to reduce the infinite system to a finite system on the bounded domain. The resulting discrete DtD-type ABCs are exact and are tractable for practical implementations, but it is hard to obtain their stability and convergence analysis. To this end, we further construct the discrete DtN-type ABCs based on the discrete nonlocal Green's first identity. The DtN-type ABCs is useful to establish the stability and convergence of the reduced finite system.
Using the energy method, we prove that under the CFL condition of the nonlocal case, the proposed numerical scheme has an optimal convergence order of $\O(\tau^2+h^q)$, where $\tau$ and $h$ denote the time step size and spatial mesh size.  

 The paper is organized as follows. In section 2, a fully discrete scheme is presented to approximate the nonlocal wave equation \eqref{NonModel} to obtain the infinite discrete system. In section 3, the DtN-type ABCs are constructed based on the DtD-type ABCs, which reduce the infinite discrete system to a finite discrete systems. In section 4,  the stability and convergence of the proposed numerical schemes are analyzed, and numerical experiments are provided to demonstrate our theoretical analysis in section 5. The conclusion is drawn in section 6.

\section{Fully discrete wave system}


 In this section, we discretize the nonlocal operator \eqref{L} using the high-order quadrature-based FD scheme and approximate the temporal derivative using the explicit FD scheme to achieve a fully discrete wave system over the whole space.
\subsection{Discretization of the nonlocal operator}
 {
Here we extend the second-order quadrature-based FD scheme approximating spatially nonlocal operators \citep[see][]{tian2013analysis,du2019asymptotically, du2018nonlocal} to arbitrarily high-order scheme.
First we state some norm notations used in the whole paper. The notations $\vert\cdot\vert_1$ and $\|\cdot\|$ stand for the $\ell_1$ norm and standard Euclidean norm (i.e., $\ell^2$-norm) in the $d$-dimensional vector space, resprectively. And $|\cdot|_\infty$ represents the maximum norm, concretely, for a vector $\vx\in\mathbb{R}^d$ or a matrix $A\in \mathbb{R}^{m\times n}$,
$$
|\vx|_\infty = \max_{i={1,\cdots,d}}|x_i|, \quad |A|_\infty = \max_{i={1,\cdots,m};j={1,\cdots,n}}|A_{i,j}|.
$$
 Let $\{\bm{x_k} =\vk h\}_{\vk\in\Z^d}$ be the set of nodes (grid points) of the uniform rectangular grid $\mathcal{T}_h$ over the whole space with mesh size $h$, where $\vk$ denotes a multiindex.
The nonlocal operator \eqref{L} acting on $u(\bm{x_k})$ can be written as
\beq \label{nonop}
\L_{\delta} u(\bm{x_k}) = \int_{B_{\delta}(\bm{x_k})} \frac{u(\bm{x_k})-u(\bm{y})}{w(\bm{x_k}-\vy)} w(\bm{x_k}-\vy)\gamma(\bm{x_k}-\vy)d\vy,
\eeq
where the weight function $w(\vz) = {\|\vz\|^2}/{|\vz|_1}$
is introduced to ensure the approximate scheme is asymptotically compatible (AC) \citep[see][]{du2019asymptotically}, a concept proposed by \citet{tian2014asymptotically}, which means that the solution of the scheme converges to that of the corresponding local continuum models when both horizon $\delta$ and mesh size $h$ tend to zero, regardless of how $\delta$ and $h$ may or may not be dependent (see \citet{tian2020asymptotically}, \citet{tian2013analysis}, \citet{tian2014asymptotically} for further information). The property of AC is vital in multiscale modelling and computation.
In this work, we focus on the case of fixed $\delta$, so whether the numerical scheme is AC is not our main concern.

We use the idea of composite integration rule to compute the integral \eqref{nonop}. First we divide the integral domain $B_{\delta}(x_{\vk})$ into $(2L/p)^d$ equal small domains $T^{\vk}_{\vi}$, where $L=\lceil \delta/h \rceil$. For an example, the 1D domain $T^{k}_{i} $ is given as
\beqs
T^{k}_{i} = [x_{k}-Lh+(i-1)ph,~ x_{k}-Lh+iph], \quad i=1,2,\dots,2L/p.
\eeqs
For simplicity, we always choose $h$ that can make $\delta$ an integral multiple of $h$ and $L$ an integral multiple of $p$. Then on each small domain $T^{\vk}_{\vi}$, we use the $p$th-degree Lagrange interpolation to approximate the integrand part $\displaystyle \frac{u(\bm{x_k})-u(\bm{y})}{w(\bm{x_k}-\vy)}$. The rest part $w(\bm{x_k}-\vy)\gamma(\bm{x_k}-\vy)$ can be regarded as the integral weight. Let $u_{\vk}$ be the approximation of $u(\bm{x_k})$, $\Phi_{\vk,p}(\vx)$ be the $p$th-degree (1D) or bi$p$th-degree (2D) Lagrange polynomial at point $\bm{x_k}$ on each divided small domain $T^{\vk}_{\vi}$, then one obtains the discretization for \eqref{nonop} as
 }
\beq\label{a}
\begin{aligned}
\L_{\delta,h} u_{\vk}
&= \sum_{\vm\in \Z^d,\vm\neq\vk} \frac{u_{\vk}-u_{\vm}}{w(\bm{x_k}-\bm{x_m})} \int_{B_{\delta}(\bm{x_k})} \Phi_{\vm,p}(\vy)w(\bm{x_k}-\vy)\gamma(\bm{x_k}-\vy)d\vy \\
&= \sum_{\vm\in \Z^d,\vm\neq\vk} \frac{u_{\vk}-u_{\vm}}{w(\bm{x_k}-\bm{x_m})} \int_{B_{\delta}(\bm{0})} \Phi_{\vm-\vk,p}(\vs)w(\vs)\gamma(\vs)d\vs \\
&= \sum_{\vm\in \Z^d}a_{\vk-\vm}(u_{\vk}-u_{\vm}),
\end{aligned}
\eeq
where
\begin{eqnarray}\label{coe}
a_{\vm} = \left\{
\begin{array}{cc}
\displaystyle \frac{1}{w(h\vm)}\int_{B_{\delta}(\bm{0})} \Phi_{\vm,p}(\vs)w(\vs)\gamma(\vs)d\vs, & \vm\neq \bm{0},\\
 \displaystyle 0, & \vm = \bm{0}.
\end{array}
\right.
\end{eqnarray}
It is obvious that $a_{\vm}=a_{-\vm}$.

According to the finite horizon assumption \eqref{ass2} of kernel, the coefficient $a_{\vm}$ satisfies
$$
a_{\vm} = 0, \quad |\vm|_\infty\ge L.
$$
For further study,
the following equivalent form of the $\L_{\delta,h}$ is needed
\beq
\L_{\delta,h} u_{\vk} = \sum_{|\vm|_{\infty}\le L}c_{\vm}u_{\vk+\vm},\quad \vk\in\Z^d
\eeq
with
\beq
c_{\vm}=
\left\{
\begin{array}{cc}
-a_{\vm},& \vm\neq \bm{0},\\
\displaystyle \sum_{\vm\in \Z^d,\vm\neq\bm{0}} a_{\vm},&\vm=\bm{0},
\end{array}
\right.
\eeq
where the property $a_{\vm}=a_{-\vm}$ is used.

 {
On the truncation error of quadrature-based FD approximation \eqref{a}, we have the following lemma.
\begin{lemma}\label{trun_err}
If $u\in C_b^{p+3}(\R^d)$ and $w(\vs)\gamma(\vs)$ is integrable in $B_\delta(\bm{0})$,
then it holds that
\beq
|\L_{\delta,h} u -\L_\delta u|_\infty \le Ch^q,
\eeq
where $C$ is a constant independent of $h$. And the order $q$ is given as
\beq
q=
\left\{
\begin{array}{ll}
p+1,& p~ \text{is odd},\\
p+2,& p~ \text{is even and}~ u\in C_b^{p+4}(\R^d).
\end{array}
\right.
\eeq
\end{lemma}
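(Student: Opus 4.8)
The plan is to recast the truncation error as the quadrature error of a single weighted integral and then reduce it to a piecewise Lagrange interpolation estimate. Using the change of variables already made in \eqref{a}, write $N(\vs):=u(\vx_{\vk})-u(\vx_{\vk}+\vs)$ and $\phi(\vs):=N(\vs)/w(\vs)$, so that $\L_\delta u(\vx_{\vk})=\int_{B_\delta(\bm{0})}\phi(\vs)\,w(\vs)\gamma(\vs)\,\d\vs$ and $\L_{\delta,h}u_{\vk}=\int_{B_\delta(\bm{0})}(I_h\phi)(\vs)\,w(\vs)\gamma(\vs)\,\d\vs$, where $I_h$ denotes the piecewise $p$th-degree (1D) or bi-$p$th-degree (2D) Lagrange interpolation on the subdomains $T^{\vk}_{\vi}$, with the node $\vs=\bm{0}$ omitted. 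The first issue is that $\phi$ jumps across $\vs=\bm{0}$, so I would first symmetrise: since $a_{\vm}=a_{-\vm}$, pairing $\vm$ with $-\vm$ turns the stencil into $\L_{\delta,h}u_{\vk}=\sum_{\vm}a_{\vm}M(h\vm)$ with the even function $M(\vs):=u(\vx_{\vk})-\half\big(u(\vx_{\vk}+\vs)+u(\vx_{\vk}-\vs)\big)=\O(\|\vs\|^2)$. Setting $\Psi:=M/w$ and using $a_{\vm}=w(h\vm)^{-1}\int_{B_\delta(\bm{0})}\Phi_{\vm,p}\,w\gamma\,\d\vs$, this reads exactly $\int_{B_\delta(\bm{0})}(I_h\Psi)\,w\gamma\,\d\vs$, while $\L_\delta u(\vx_{\vk})=\int_{B_\delta(\bm{0})}\Psi\,w\gamma\,\d\vs$ because the odd part of $N$ integrates to zero against the even kernel. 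Hence
\[
\L_\delta u(\vx_{\vk})-\L_{\delta,h}u_{\vk}=\int_{B_\delta(\bm{0})}\big(\Psi-I_h\Psi\big)(\vs)\,w(\vs)\gamma(\vs)\,\d\vs,
\]
with $\Psi$ even and, crucially, $\Psi(\bm{0})=0$, so omitting the central node costs nothing.

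The second step is the uniform interpolation bound. Here I would exploit the grid alignment ($\delta$ and $L$ integer multiples of $h$ and $p$): the loci where $w$ fails to be smooth, namely $\vs=\bm{0}$ and the coordinate hyperplanes, all lie on faces of the subdomains $T^{\vk}_{\vi}$ rather than cutting their interiors. Consequently $\Psi$ is smooth up to the closure of each subdomain, with $\partial^{p+1}\Psi$ bounded there in terms of derivatives of $u$; this is where $u\in C_b^{p+3}(\R^d)$ enters, the extra derivatives compensating the loss incurred by dividing the smooth numerator $M$ by the degree-one homogeneous weight $w$ near the origin. The classical estimate then gives $|\Psi-I_h\Psi|\le Ch^{p+1}$ on each $T^{\vk}_{\vi}$; multiplying by $|w\gamma|$, summing, and using that $w\gamma$ is integrable on $B_\delta(\bm{0})$ yields $|\L_{\delta,h}u-\L_\delta u|_\infty\le Ch^{p+1}$, which is the odd-$p$ bound.

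For even $p$ one extra order must be extracted. The mechanism is the symmetry of the nodal polynomial: with an odd number $p+1$ of equispaced nodes, the one-dimensional nodal polynomial $\omega_T$ is odd about the centre of each subdomain, so $\int_{T}\omega_T\,\d\vs=0$. Expanding the weight $w\gamma$ (which is $C^1$ on each subdomain) and the interpolation remainder to one higher order, the leading term cancels, the surviving contribution on each subdomain is $\O(h^{p+3})$ and involves $\nabla(w\gamma)$ and $\partial^{p+2}\Psi$, and summation over the subdomains produces $\O(h^{p+2})$. In two dimensions this is carried out direction by direction on the tensor-product interpolant. This refinement is what forces the stronger regularity $u\in C_b^{p+4}(\R^d)$.

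I expect the genuine difficulty to be the near-origin analysis. The division by $w$ makes $\phi$ and its derivatives badly behaved at $\vs=\bm{0}$, and a naive pointwise interpolation bound on the subdomains touching the origin is too crude for $p\ge 3$. The symmetrisation to the even quantity $\Psi$ with $\Psi(\bm{0})=0$, together with the observation that the corner of $\Psi$ sits precisely at a grid node (so $\Psi$ stays smooth on each closed subdomain), is what removes this singularity; making this rigorous — controlling $\partial^{p+1}\Psi$ up to the origin and verifying that the weighted remainder integrals on the origin-adjacent subdomains are of the claimed order — is the technical heart of the argument. Establishing the even-$p$ cancellation uniformly in $\vk$, beyond the leading-order heuristic, is the other point requiring care.
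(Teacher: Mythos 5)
Your setup and your treatment of odd $p$ essentially coincide with the paper's proof: the paper likewise symmetrizes to the even quantity $G(s)=\bigl(2u(x_k)-u(x_k+s)-u(x_k-s)\bigr)/w(s)$, rewrites both $\L_\delta u(x_k)$ and $\L_{\delta,h}u_k$ as weighted integrals of $G$ and of its piecewise interpolant, and exploits that the origin is a subdomain endpoint so that the second-order vanishing of the numerator tames the division by $w$. Where you assert that $\partial^{p+1}\Psi$ is bounded up to the closure of each subdomain, the paper makes this concrete by subtracting the piecewise polynomial $H(s)=-2\sum_m s^{2m}u^{(2m)}(x_k)/\bigl((2m)!\,w(s)\bigr)$ (reproduced exactly by the interpolation) and bounding the Taylor remainder $J=G-H$ via its integral form, which is the same device as your representation $M(s)/s=\int_0^1 M'(ts)\,\d t$. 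One caveat you should not gloss over: in 2D the claim that $\Psi=M/w$ is smooth up to the closure of the origin-adjacent subdomains is false as stated, since $M(\vz)/\|\vz\|^2$ has a direction-dependent limit at $\vz=\bm{0}$ even though $M=\O(\|\vz\|^2)$; the paper sidesteps this by omitting the 2D proof, but your argument would need a separate treatment of those few cells.

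The genuine gap is in your even-$p$ step. Your cancellation is local --- $\int_T\omega_T=0$ on each subdomain --- and the surviving term after freezing the weight at the cell centre carries $\nabla(w\gamma)$, so your final bound requires $\nabla(w\gamma)\in L^1(B_\delta(\bm{0}))$. That hypothesis is not in the lemma and fails for the fractional-Laplacian kernel \eqref{kernel3}, for which $w\gamma\sim\|\vs\|^{-2\nu}$ in 1D and $(w\gamma)'$ is non-integrable; your argument would then only deliver $\O(h^{p+2-2\nu})$. The paper uses two different devices that need only $w\gamma\in L^1$: first, degree-$(p+1)$ exactness of the quadrature is obtained by a \emph{global} odd-symmetry argument over $B_\delta(\bm{0})$ (pairing each cell with its mirror image and using the evenness of $w\gamma$), not by $\int_T\omega_T=0$; second, the error itself is estimated through a degree-$(p+1)$ Hermite interpolant matching $G$ at the Lagrange nodes and $G'$ at the cell midpoint, whose remainder $\frac{G^{(p+2)}(\xi)}{(p+2)!}(s-s_{*})\prod_j(s-s_j)$ is \emph{pointwise} $\O(h^{p+2})$ and can therefore be integrated against $|w\gamma|$ directly. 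To prove the lemma under its stated hypotheses you would need to replace your weight-expansion step by a mechanism of this pointwise type.
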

For brevity, the proof of this lemma is given in Appendix.
\begin{remark}
We discuss the symbol of the coefficient $a$ given in \eqref{coe}.
For the case of $p=1$, all coefficients $a_{\vm}$ are non-negative since the basis function $\Phi_{\vm,1}(\vs)$ is non-negative. When $p\ge2$, the situation is complicated. The sign of $a_{\vm}$ depends on the kernel, and it cannot be guaranteed to be always non-negative.
Through the direct calculation, one has that the coefficients are non-negative when $p\le 6$ for the constant kernel. And for the common used kernels $\gamma_1(\vs)=C\|\vs\|^{-1}$ and $\gamma_2(\vs)=C\|\vs\|^{-2}$, the corresponding coefficients are non-negative when $p\le 7$ and $p\le 3$, respectively. 
\end{remark}
\begin{remark}
The weight function $w(z)$ is introduced to guarantee the numerical approximation with the linear interpolation (i.e., $p=1$) is asymptotically compatible in \citet{du2019asymptotically}. Here we keep the weight $w(z)$ in discretization since the introduction of $w(z)$ also can relax the requirement for kernel function \citep[see][]{du2019asymptotically}.
\end{remark}
}

\subsection{Fully discrete wave system}
 Let $\mathcal{T}_\tau = \{t_n | t_n=n\tau; \; 0\leq n\leq N \}$ be a uniform
partition of $[0,T]$ with the time step size $\tau=T/N$, and $u_{\vk}^{(n)}$ be the approximation of $u(x_{\vk},t_n)$. Define the second-order approximation for time derivative by
\beq
\D_{\tau} u^{(n)}= \frac{1}{\tau^2}\big(u^{(n+1)}-2u^{(n)}+u^{(n-1)}\big).
\eeq
Using the explicit finite difference method to discretize the temporal derivative for problem \eqref{NonModel},
we have the fully discrete system on the whole space as
\begin{align}\label{DisModel}
&\D_{\tau} u^{(n)}_{\vk} + \L_{\delta,h} u^{(n)}_{\vk} = f^{(n)}_{\vk},&& \vk \in\Z^d, n\ge1, \\
&u^{(0)}_{\vk} = \varphi_{\vk}, && \vk\in\Z^d, \label{Initial1}\\
&u_{\vk}^{(1)}=\varphi_{\vk}+\tau \psi_{\vk}+\frac{\tau^2}{2}(-\L_{\delta,h}\varphi_{\vk}+f_{\vk}^{(0)}),&& \vk\in\Z^d.\label{Initial2}
\end{align}

\section{Design of absorbing boundary conditions}
We now consider the construction of DtN-type ABCs for the fully discrete system \eqref{DisModel}-\eqref{Initial2} based on the DtD-type mappings proposed by \citet{du2018nonlocal, du2018numerical}.
We first streamline the useful notations and  tools. Let $\Omega=\{\vx\in\R^d:|\vx|_{\infty}< \beta\}$ be the computational domain of interest, where $\beta$ is a positive real number. Set $M=\lceil{\beta}/{h}\rceil$.
To clearly address the index in various grid domains, we define
\begin{eqnarray}\label{sym}
\begin{split}
& \k = \{\vk\in \Z^d:|\vk|_{\infty}< M\}, \quad \k^c=\{\vk\in \Z^d: |\vk|_{\infty}\ge M\}, \\
&\k^-=\{\vk\in \Z^d: |\vk|_{\infty}< M-L\},\quad \k_{\gamma}^-=\{\vk\in \Z^d: M-L\le |\vk|_{\infty}< M\},\\
&\k^+= \{\vk\in \Z^d: |\vk|_{\infty}< M+L\} ,\quad\k_{\gamma}^+=\{\vk\in \Z^d: M\le |\vk|_{\infty}< M+L\}.
\end{split}
\end{eqnarray}
We also introduce the $z$-transform and its inverse transform for a bounded infinite sequence $\{u^{(n)}\}_{n=0}^{+\infty}$ as
\begin{align}\label{z-tran}
&\hat{u}(z)=\sum_{n=0}^{+\infty}z^{-n}u^{{(n)}}, \quad |z|>1,\\
&u^{(n)}=\frac{1}{2\pi {\rm i}}\int_{C_{\rho}}\hat{u}(z)z^{n-1}dz,\quad n\ge0,\quad \rho>1, \label{Ivzf}
\end{align}
where $z$ is a continuous complex variable, $C_{\rho}$ is a counterclockwise circle with a radius of $\rho$.

For vectors $\bm{u}=\{u_{\vk}\}_{\vk\in\F}$ and $\bm{v}=\{v_{\vk}\}_{\vk\in\F}$ ($\F$ indicates any subset of $\Z^d$, such as $\k, \k^c$),
 the $\ell^2$-inner product and norm are respectively given as
\beq \label{norm1}
\left(\bm{u},\bm{v} \right)_{\F}=\sum_{\vk\in\F}u_{\vk} v_{\vk} \quad \text{and}\quad \|\bm{u}\|_{\F}=\sqrt{(\bm{u},\bm{u})_\F}.
\eeq
And we denote $L^2$-inner product by $(\cdot, \cdot)$, i.e.,
\begin{eqnarray*}
(f,g)=\int_{\Omega}f(\vx)g(\vx)d\vx,\quad \forall f,g \in L^2(\Omega).
\end{eqnarray*}
The discrete $L^2$-inner product and norm in $\F\subset\Z^d$ are defined as
\begin{eqnarray*}
\left(\bm{u},\bm{v} \right)_{h,\F}=h^d\sum_{\vk\in\F}u_{\vk} v_{\vk}, \quad \quad \|\bm{u}\|_{h,\F}=\sqrt{(\bm{u},\bm{u})_{h,\F}}.
\end{eqnarray*}
And we define a discrete bilinear form $\left<\cdot,\cdot\right>_{h,\F}$ by
\begin{eqnarray}\label{nonlocalnorm}
\left<\bm{u},\bm{v}\right>_{h,\F} =\frac{h^d}{2}\sum_{\vk\in\F}\sum_{\vm\in\F}a_{\vk-\vm}(u_{\vk}-u_{\vm})(v_{\vk}-v_{\vm}).
\end{eqnarray}
Then $ |\bm{u} |_{h,\F}:=\sqrt{\left<\bm{u},\bm{v}\right>_{h,\F}}$ is a discrete seminorm.
For brevity, for any vector confined on the index set $\k$, we omit the subscripts $\k$ in the notation below, such as
\begin{eqnarray*}
&& \left(\bm{u},\bm{v} \right)_{h,\k} :=\left(\bm{u},\bm{v} \right)_h, \quad \quad \|\bm{u}\|_{h,\k} :=\|\bm{u}\|_h.
\end{eqnarray*}



\subsection{DtD-type absorbing boundary conditions}
To construct the DtN-type ABCs, we briefly review the design of DtD-type ABCs.
As the initial data $\varphi$, $\psi$ and the source function $f$ are compactly supported, we assume that
\beq
f(\vx,t)= \varphi(\vx)= \psi(\vx)=0,\quad |\vx|_{\infty}>\beta.
\eeq
The problem \eqref{DisModel}-\eqref{Initial2} is equivalent to the following two subproblems. The first subproblem is defined on the index set $\k$ as
\beq
\begin{aligned}\label{InModel}
&\D_{\tau}u^{(n)}_{\vk} + \L_{\delta,h}u^{(n)}_{\vk}=f^{(n)}_{\vk},&& \vk \in\k, n\ge1, \\
&u^{(0)}_{\vk} = \varphi_{\vk}, && \vk\in\k, \\
&u_{\vk}^{(1)}=\varphi_{\vk}+\tau \psi_{\vk}+\frac{\tau^2}{2}(-\L_{\delta,h}\varphi_{\vk}+f_{\vk}^{(0)}),&& \vk\in\k.
\end{aligned}
\eeq
The second subproblem is defined on the index set $\k^c$ as
\begin{eqnarray} \label{ExtModel}
\begin{aligned}
&\D_{\tau}u^{(n)}_{\vk} +  \L_{\delta,h}u^{(n)}_{\vk}=0,&& \vk \in\k^c, n\ge1, \\
& u_{\vk}^{(0)} = 0,\quad u_{\vk}^{(1)} = 0, &&\vk \in\k^c.
\end{aligned}
\end{eqnarray}
Problems \eqref{InModel} and \eqref{ExtModel} can not be solved independently, since they are related through the boundary.
Following the idea presented in \citet{du2018nonlocal, du2018numerical}, the value of $\{u_{\vk}\}_{\vk\in\k_{\gamma}^+}$ can be expressed by $\{u_{\vk}\}_{\vk\in\k_{\gamma}^-}$ through considering the exterior problem \eqref{ExtModel}. One may apply the $z$-transform to \eqref{ExtModel} to have
\beq \begin{split} \label{zExtModel}
&s\hat{u}_{\vk}+\sum_{|\vm|_{\infty}\le L}c_{\vm}\hat{u}_{\vk+\vm}=0, \quad \vk\in\k^c, \\
& \lim_{|\vk|\rightarrow+\infty} \hat{u}_{\vk}=0,
\end{split} \eeq
where $\displaystyle s=(z^{-1}-2+z)/\tau^2$.

To investigate the well-posedness of problem \eqref{zExtModel}, we introduce the sequence space equipped with the $\ell^2$-norm
$$
\ell^2=\{\hat{\vu}=\{\hat{u}_{\vk}\}_{\vk\in\Z^d}:\|\hat{\vu}\|^2=\sum_{\vk\in\Z^d}|\hat{u}_{\vk}|^2<+\infty\},
$$
and define the linear operator $\L_{\delta,h}$ on $\ell^2$ as
\begin{equation*}
\L_{\delta,h} \hat{\vu}=\left\{ \sum_{|\vm|_\infty\le L} c_{\vm}\hat{u}_{\vk+\vm}\right\}_{\vk\in\Z^d},\quad
\forall~ \hat{\vu}=\{\hat{u}_{\vk}\}_{\vk\in\Z^d}\in \ell^2.
\end{equation*}
One can verify that $\L_{\delta,h}$ is nonnegative and symmetric.
Therefore, the spectrum set $\sigma(\L_{\delta,h})$ is located in the positive-half real axis.

For all $s\notin\sigma(\mathcal{-\L}_{\delta,h})$ and prescribed the boundary data $\hat{u}_{\vk}$ with all $\vk\in\k^-_{\gamma}$,  the exterior problem \eqref{zExtModel} admits a unique solution.
Accordingly, we expect that the values of $u_{\vk}$ on $\k_{\gamma}^+$ can be expressed by the values on $\k_{\gamma}^-$, i.e., there exists a matrix function with entries
$$\hat{\K}_{\vk,\vm}=\hat{\K}_{\vk,\vm}(z),\quad \vk\in\k_{\gamma}^+, ~\vm\in\k_{\gamma}^-, $$
such that
\beq \label{DtD}
\hat{u}_{\vk}=\sum_{\vm\in\k_{\gamma}^-}\hat{\K}_{\vk,\vm}\hat{u}_{\vm}, \quad \vk\in\k_{\gamma}^+.
\eeq
Applying the inverse $z$-transform to \eqref{DtD}, one has the following DtD-type ABC:
\beq  \label{DtD0}
u_{\vk}^{(n)}= \sum_{\vm\in \k_{\gamma}^-}(\K_{\vk,\vm}*u_{\vm})^{(n)} =\sum_{\vm\in \k_{\gamma}^-} \sum_{j=0}^{n} \K_{\vk,\vm}^{(n-j)}u_{\vm}^{(j)}, \quad \vk\in\k_{\gamma}^+,
\eeq
where
\beq \label{int2d}
\K_{\vk,\vm}^{(j)} = \frac{1}{2\pi {\rm i}}\int_{C_\rho}\hat{\K}_{\vk,\vm}(z)z^{j-1}dz,\quad \rho>1,~ j\ge0.
\eeq

In simulations, we utilize the trapezoidal rule to approximate the contour integral in \eqref{int2d}, i.e.,
\begin{equation} \label{discreteCoeApprox}
\K_{\vk,\vm}^{(j)}\approx \widetilde{\K}_{\vk,\vm}^{(j)}= \frac{\rho^{j}}{P}\sum_{p=1}^{P}\hat{\K}_{\vk,\vm}(\rho e^{2\pi {\rm i} p/P})e^{2\pi {\rm i} jp/P },\quad  j \ge0,
\end{equation}
where $P$ is a positive integer. And for any $\varepsilon>0$, one can take $P$ large enough such that
\begin{equation}\label{K_err}
|\K^{(j)} - \widetilde{\K}^{(j)} |_\infty \leq \varepsilon, \quad \forall j.
\end{equation}

So far, we achieve a discrete initial-boundary-value problem with the  DtD-type ABCs
\beq
\begin{aligned}
&\D_{\tau}u_{\vk}^{(n)} +\L_{\delta,h} u_{\vk}^{(n)} = f_{\vk}^{(n)},&& \vk \in\k, n\ge1, \\
& u^{(n)}_{\vk} =  \sum_{\vm\in \k_{\gamma}^-} \sum_{j=0}^{n} \widetilde{\K}_{\vk,\vm}^{(n-j)}u_{\vm}^{(j)}, && \vk\in\k^+_{\gamma}, n\ge1,\\
&u^{(0)}_{\vk} = \varphi_{\vk}, && \vk\in\k, \\
&u_{\vk}^{(1)}=\varphi_{\vk}+\tau \psi_{\vk}+\frac{\tau^2}{2}(-\L_{\delta,h}\varphi_{\vk}+f_{\vk}^{(0)}),&& \vk\in\k.
\end{aligned}
\eeq

In the following, we use the recently developed methods in \citet{du2018nonlocal, du2018numerical} to address how to achieve the formula of $\hat{\K}_{\vk,\vm}$ for the 1D and 2D cases, respectively.


\bigskip
\noindent{\bf One-dimensional case.} For the 1D case, $\k = (-M,M)\cap\Z$, then we divide $\k^c$ into two index subsets $\k^{c,r}=\{k\in\Z:k\ge M\}$ and $\k^{c,l}=\{k\in\Z:k\le-M\}$. Similarly, let $\k_{\gamma}^{+,r}=\{{M},\dots,{M+L-1}\}$, $\k_{\gamma}^{+,l}=\{{-M-L+1},\dots,{-M}\}$, then $\k_{\gamma}^+=\k_{\gamma}^{+,r}\cup\k_{\gamma}^{+,l}$.

We first consider the right exterior problem, i.e., the discrete problem \eqref{zExtModel} restricted to $\k^{c,r}$.
Let us introduce a family of vectors as
$$
\hat{\vU}_{M,q}=[\hat{u}_{M+(q-1)L},\dots,\hat{u}_{M+qL-1}]^T,~ q=0,1,\dots.$$
Then, the discrete problem \eqref{zExtModel} restricted on $\k^{c,r}$ can be rewritten as
\begin{eqnarray} \label{matrix}
&&s\hat{\vU}_{M,q}+A\hat{\vU}_{M,q-1}+B\hat{\vU}_{M,q}+A^T\hat{\vU}_{M,q+1}=\bm{0},\quad q\ge 1,\\
&& \lim_{q\rightarrow +\infty}\hat{\vU}_{M,q}= \bm{0}, \label{matrix1}
\end{eqnarray}
where the coefficient matrices $A$ and $B$ are given as
\begin{equation} \label{AB}
A=\begin{pmatrix}
c_L&\cdots&\cdots&c_2&c_1\\
&c_L&\cdots&\cdots&c_2\\
&&c_L&\cdots&\cdots\\
&&&\cdots&\cdots\\
   &   &   &   &c_L
\end{pmatrix},\ B=\begin{pmatrix}
c_0&c_1&\cdots&\cdots&c_{L-1}\\
c_1&c_0&c_1&\cdots&\cdots\\
\cdots&c_1&c_0&c_1&\cdots\\
\cdots&\cdots&\cdots&\cdots&\cdots\\
c_{L-1}&\cdots&\cdots&c_1&c_0
\end{pmatrix}.
\end{equation}
Set $A_0=-\left(s+B\right)^{-1}A$ and $B_0=-\left(s+B\right)^{-1}A^T$. Eq. \eqref{matrix} can be further written as
\begin{equation}\label{matrix2}
\begin{split}
\hat{\vU}_{M,q}=A_0\hat{\vU}_{M,q-1}+B_0\hat{\vU}_{M,q+1},\quad q\ge 1.
\end{split}
\end{equation}

Prescribed $\hat{\vU}_{M,0}$, from \eqref{matrix2} with boundary condition \eqref{matrix1}, one can express $\hat{\vU}_{M,1}$ by $\hat{\vU}_{M,0}$  as
\begin{equation}\label{matrix operator}
\hat{\vU}_{M,1}=\hat{\K}_r(s)\hat{\vU}_{M,0}.
\end{equation}
Specifically for $L=1$, both $A_0$ and $B_0$ degenerate to scalars. So the mapping $\hat{\K}_r(s)$ can be computed analytically as
\begin{equation}
\hat{\K}_r(s)=\frac{c_0+s-\sqrt{2c_0s+s^2}}{c_0}.
\end{equation}

However, for the case of $L\ge2$, it is nontrivial to find the exact expression of $\hat{\K}_r(s)$. We use the iterative technique proposed in \citet{du2018nonlocal} to numerically calculate $\hat{\K}_r(s)$ to have
\begin{equation} \label{kernelK}
\hat{\K}_r(s)=A_0+B_0[A_1+B_1[\dots+B_{m-1}[A_m+B_m[\dots]]]],
\end{equation}
where $A_0$ and $B_0$ are given in \eqref{AB}, and $A_m$, $B_m$ ($m\geq 1$) are computed iteratively by
\begin{equation} \label{AmBm}
\small
(A_{m}\;\; B_{m})=
(0\;\; I\;\;0)
\begin{pmatrix}
I&-B_{m-1}&0\\
-A_{m-1}&I&-B_{m-1}\\
0&-A_{m-1}&I
\end{pmatrix}^{-1}
\begin{pmatrix}
A_{m-1}&0\\0&0\\0&B_{m-1}
\end{pmatrix}.
\end{equation}




By analogy with the design of DtD-type mapping on the right, one can derive a DtD-type mapping on the left as
\begin{equation}\label{con1}
\hat{\vU}_{-M,1}=\hat{\K}_l(s)\hat{\vU}_{-M,0},
\end{equation}
where
$$\hat{\vU}_{-M,q}=[\hat{u}_{-M-(q-1)L},\dots,\hat{u}_{-M-qL+1}]^T,\quad q=0,1.$$


 {
\begin{remark}
By truncating the series terms in the formula \eqref{kernelK}, we obtain the approximation of the operator $\hat{\K}$. The truncation criterion is to introduce a tolerance error, which is set as $\epsilon:= 10^{-14}$, such that the $L^2$-norms of $A_m$ and $B_m$ in \eqref{AmBm} are less than the given tolerance $\epsilon$. This provides an efficient way of evaluating $\hat{\K}$ in \eqref{kernelK} for the problem considered in this paper. It turns out that the maximum number of the iteration to obtain the converged $\hat{\K}$ for the given $\epsilon$ is less than 20 in all simulations.
\end{remark}
}

\bigskip
\noindent{\bf Two-dimensional case.} We utilize the methodology of the nonlocal potential theory to design the DtD-type ABCs for two-dimensional case \citep[see][]{du2018numerical}.
Let $G_{\vk}=G_{\vk}(z)$ be the fundamental solution of the equation \eqref{zExtModel} with $s(z)\not\in\sigma(-\L_{\delta,h})$, this is, $G_{\vk}$ satisfies the governing equation
\begin{eqnarray}
\label{greeneqn}
&&sG_{\vk}+\sum_{|\vm|_\infty\le L}c_{\vm} G_{\vk+\vm}=\delta_{\vk,\mathbf{0}},\ \vk\in \Z^2,\\
\label{infinitybc2}
&& \lim_{ |\vk|\rightarrow+\infty}G_{\vk}= 0,
\end{eqnarray}
where $\delta_{\vk,\mathbf{0}}$ stands for the Kronecker symbol. The two-dimensional discrete Fourier transform (DFT) of $\{G_{\vk}\}_{\vk\in\Z^2}$ is defined as
$$
(\mathcal{F}G)_{\vxi}=\sum_{\vk\in \Z^2}G_{\vk} e^{-{\rm i}\vk\cdot\vxi},\ \vxi\in\R^2.
$$
By performing the DFT to \eqref{greeneqn}, one has
\beq \label{fft}
(\mathcal{F}G)_{\vxi}=\left(s+\sum_{|\vm|_{\infty}\le L}e^{{\rm i}\vm\bm\cdot\vxi}c_{\vm}\right)^{-1}, \ \vxi\in\R^2.
\eeq
Then using the inverse Fourier transform on $\{(\mathcal{F}G)_{\vxi}\}_{\vxi\in \R^2}$ yields
$$
G_{\vk}=\frac{1}{4\pi^2}\int_{[0,2\pi]^2}\left(s+\sum_{|\vm|_\infty\le L}e^{{\rm i}\vm\bm\cdot\vxi}c_{\vm}\right)^{-1}e^{{\rm i}\vk\cdot\vxi}d\vxi,\ \vk\in \Z^2.
$$


Following the idea of the potential theory, we assume the solution of \eqref{zExtModel} can be expressed as
\begin{eqnarray}\label{DtD1}
\hat{u}_{\vk}=\sum_{\vm\in\k_{\gamma}^-}G_{\vk+\vm}q_{\vm},\quad |\vk|_{\infty}>M-L,
\end{eqnarray}
where $q_{\vm}$ is the potential to be determined.
Confining \eqref{DtD1} to the boundary layer $\k_{\gamma}^-$ produces
$$\hat{u}_{\vk}=\sum_{\vm\in\k_{\gamma}^-}G_{\vk+\vm}q_{\vm},\quad \vk\in\k_{\gamma}^-.$$
Denote $(G_{\vk,\vm}^{-1})$ by the inverse matrix of the matrix $G_{\vk+\vm}$ with $\vk,\vm\in\k_{\gamma}^-$. Thus, the potential $q_{\vm}$ can be expressed by the fundamental solution $G_{\vk}$ and the value of $\hat{u}_{\vk}$ on $\vk\in\k_{\gamma}^-$ in the form of
\beq \label{PS}
q_{\vm} = \sum_{\vl\in\k_{\gamma}^-}G_{\vm,\vl}^{-1}\hat{u}_{\vl},\quad \vm\in \k_{\gamma}^-.
\eeq
Substituting \eqref{PS} into \eqref{DtD1} and restricting to the boundary $\k_{\gamma}^+$ yields
\beq \label{DtD2}
\hat{u}_{\vk} = \sum_{\vm\in \k_{\gamma}^-}\hat{\K}_{\vk,\vm}\hat{u}_{\vm}, \quad \vk\in\k_{\gamma}^+,
\eeq
with
$$\hat{\K}_{\vk,\vm}=\sum_{\vl\in\k_{\gamma}^-}G_{\vk+\vl}G^{-1}_{\vl,\vm},\quad \vk\in\k_{\gamma}^+,\ \vm\in\k_{\gamma}^-.$$

 {
We point out that our procedure of adopting the potential theory to construct ABCs for the two-dimensional discrete system is similar to the difference potential method (DPM) proposed by Ryabenkii et al. \citep[see,][and references therein]{ryaben2006a,tsynkov1996artificial}. The DPM is also based on the potential theory, and concretely, which needs to formulate an appropriate auxiliary problem (to simplify the numerical implementation), and then construct the boundary equation with projection. The DPM has been successfully applied to design ABCs at irregular artificial boundaries for local problems. 
}

\subsection{DtN-type absorbing boundary conditions}
In order to construct the DtN-type ABC based on the DtD-type mapping \eqref{DtD0}, we now introduce the formula of nonlocal Neumann boundary.
By analogy with the classical Green's first identity
$$
(-\Delta u,v)_{\Omega}=(\nabla u, \nabla v)_{\Omega}-\left<\partial_{\vn} u, v\right>_{\partial\Omega},
$$
the nonlocal Green's first identity is given as:
\begin{align}
(\L_{\delta}u, v)
=&\int_{\vx\in\Omega}\int_{\vy\in\R^d}\left(u(\vx)-u(\vy)\right)v(\vx)\gamma(|\vx-\vy|)d\vy d\vx \nonumber\\
=&\frac12\int_{\vx\in\Omega}\int_{\vy\in\Omega}(u(\vx)-u(\vy))(v(\vx)-v(\vy))\gamma(|\vx-\vy|)d\vy d\vx \nonumber\\
& +\int_{\vx\in\Omega}\int_{\vy\in\Omega^c}(u(\vx)-u(\vy))v(\vx)\gamma(|\vx-\vy|)d\vy d\vx.
\label{NonGF}\end{align}
From \eqref{NonGF}, we have the nonlocal Neumann boundary (see the details in \citet{du2012analysis,du2019uniform})
\beq
\N u(\vx) = -\int_{\vy\in\Omega_{\gamma}^+}(u(\vx)-u(\vy))\gamma(|\vx-\vy|)d\vy, \quad \vx\in\Omega_{\gamma}^-,
\eeq
where $\Omega_{\gamma}^-=\{\vx\in\Omega:\text{dist}(\vx,\partial\Omega)\le\delta\}$ and the finite horizon property \eqref{ass2} is used to truncate the interaction domain. To obtain the formula of discrete Neumann boundary,  we perform the discrete nonlocal Green's first identity as
\begin{align} \label{green}
 (\L_{\delta,h} \bm{u},\bm{v})_{h} &= h^d\sum_{\vk\in \k}\L_{\delta,h} u_{\vk}\cdot v_{\vk} \notag\\
&=h^d \sum_{\vk\in \k}\sum_{\vm\in\k}a_{\vk-\vm}(u_{\vk}-u_{\vm})v_{\vk}+h^d\sum_{\vk\in\k}\sum_{\vm\in\k^c}a_{\vk-\vm}(u_{\vk}-u_{\vm})v_{\vk}  \notag \\
&=\frac{h^d}{2}\sum_{\vk\in\k}\sum_{\vm\in\k}a_{\vk-\vm}(u_{\vk}-u_{\vm})(v_{\vk}-v_{\vm}) +h^d\sum_{\vk\in\k^{-}_\gamma}\sum_{\vm\in\k^+_{\gamma}}a_{\vk-\vm}(u_{\vk}-u_{\vm})v_{\vk}  \notag\\
&= \left<\bm{u},\bm{v}\right>_h - \left(\N_\k \bm{u}, \bm{v}\right)_{\k^-_\gamma}.
\end{align}
In the above, the symbol $\left<\cdot, \cdot\right>_h$ and $\left(\cdot, \cdot\right)_{\k^-_\gamma}$ are defined in \eqref{nonlocalnorm} and \eqref{norm1}, respectively. And the discrete nonlocal Neumann boundary, denoted by $\N_\k\vu$, is formulated as
\begin{eqnarray}\label{NeuData}
&&\N_\k u_{\vk}=-h^d\sum_{\vm\in\k^+_\gamma}a_{\vk-\vm}(u_{\vk}-u_{\vm}),\quad \vk\in\k^-_\gamma.
\end{eqnarray}


Thus, we can reformulate the DtD-type mapping \eqref{DtD0} into the following DtN-type mapping
\begin{align}\label{DtN}
\N_\k u_{\vk}^{(n)}
=&-h^d\sum_{\vm\in\k^+_\gamma}a_{\vk-\vm}\left(u_{\vk}^{(n)}-\sum_{\vl\in \k_{\gamma}^-}\widetilde{\K}_{\vm,\vl}^{(n)}*u_{\vl}^{(n)}\right) \notag\\
:=& \V^{(n)}_{\vk} u_{\vk}^{(n)},\quad \vk\in\k^-_{\gamma}.
\end{align}
Finally, we obtain a numerical scheme with the DtN-type ABCs for the nonlocal problem \eqref{NonModel}
\beq\label{FinalScheme}
\begin{aligned}
&\D_{\tau}u_{\vk}^{(n)} +\L_{\delta,h} u_{\vk}^{(n)} = f_{\vk}^{(n)},&& \vk \in\k, n\ge1, \\
& \N_\k u^{(n)}_{\vk} =  \V^{(n)}_{\vk} u_{\vk}^{(n)}, && \vk\in\k^-_{\gamma}, n\ge1,\\
&u^{(0)}_{\vk} = \varphi_{\vk}, && \vk\in\k, \\
&u_{\vk}^{(1)}=\varphi_{\vk}+\tau \psi_{\vk}+\frac{\tau^2}{2}(-\L_{\delta,h}\varphi_{\vk}+f_{\vk}^{(0)}),&& \vk\in\k.
\end{aligned}
\eeq



\begin{remark}
We can also derive the Neumann boundary \eqref{NeuData} through considering the discrete nonlocal Green's first identity on the exterior domain $\k^c$,
\begin{align}\label{green2}
(\L_{\delta,h} \bm{u},\bm{v})_{h,\k^c}
=&\frac{h^d}{2}\sum_{\vk\in\k^c}\sum_{\vm\in\k^c}a_{\vk-\vm}(u_{\vk}-u_{\vm})(v_{\vk}-v_{\vm}) +h^d\sum_{\vk\in\k^{+}_\gamma}\sum_{\vm\in\k_{\gamma}^-}a_{\vk-\vm}(u_{\vk}-u_{\vm})v_{\vk} \notag \\
=&\left<\vu,\vv \right>_{h,\k^c}
+h^d\sum_{\vk\in\k^{+}_\gamma}\sum_{\vm\in\k^-_{\gamma}}a_{\vk-\vm}(u_{\vk}-u_{\vm})(v_{\vk} -v_{\vm})+\left(\N_\k \bm{u}, \bm{v}\right)_{\k^-_\gamma}.
\end{align}
This formula serves to bridge the interior and exterior problems, which will be used in stability analysis of  the numerical scheme \eqref{FinalScheme} in next section.
\end{remark}

\section{Stability and convergence analysis}
We now consider the stability of the following discrete system
\begin{align}
&\D_{\tau}\phi_{\vk}^{(n)} +\L_{\delta,h} \phi_{\vk}^{(n)} = g_{\vk}^{(n)},&& \vk\in\k, n\ge1, \label{stability}\\
& \N_\k \phi_{\vk}^{(n)} = \V^{(n)}_{\vk} \phi_{\vk}^{(n)} + g_{b,\vk}^{(n)},&& \vk\in \k^-_{\gamma}, n\ge1,\label{stability2}\\
&\phi_{\vk}^{(0)}=\mu^{(0)}_{\vk}, ~ \phi_{\vk}^{(1)}=\mu_{\vk}^{(1)},&& \vk\in\k,\label{stability3}
\end{align}
where $\bm{\mu}=\{\mu_{\vk}\}_{\vk\in\k}$ are the initial values, $\bm{g}=\{g_{\vk}\}_{\vk\in\k}$ and $\bm{g_b}=\{g_{b,\vk}\}_{\vk\in\k_{\gamma}^-}$ are the interior and boundary perturbation terms, respectively.

 Define the discrete energy norm
\begin{eqnarray}\label{energynorm}
\| \bm{\phi}^{(n)} \|^2_E = \|  \D_{\tau}^F\bm{\phi}^{(n-1)} \|_h^2+ \frac14 |\bm{\phi}^{(n)}+\bm{\phi}^{(n-1)}|_h^2, \quad n\ge1,
\end{eqnarray}
where the forward difference operator $\D_\tau^F$ is given as
$\D_\tau^F u^{(n)}= \frac{1}{\tau}\big(u^{(n+1)}-u^{(n)}\big).$

\subsection{Stability analysis}
 \begin{theorem}\label{Th1}
 Take $S=2((2L+1)^d-1)|\va|_\infty$, where $\va$ is the coefficient of the discrete nonlocal operator defined in \eqref{coe}. When $\va\ge0$, there exist positive constants $C$ and $\tau_0$ such that for $\tau\le \min \{ \tau_0,2/\sqrt{S}\}$,
 the solution of \eqref{stability}-\eqref{stability3} satisfies the following stability estimate for $l\ge 2$:
 \begin{eqnarray} \label{stabilityanalysislocal}
 \| \bm{\phi}^{(l)} \|^2_E \le \Vert \D_{\tau}^F\bm{\mu}^{(0)} \Vert_h^2+C\tau \sum_{n=1}^{l-1}\left(\|\bm{g}^{(n)} \|_h^2+h^{-d}\| \bm{g_b}^{(n)} \|^2\right).
 \end{eqnarray}
 \end{theorem}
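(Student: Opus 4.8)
The plan is a discrete energy argument whose test function is the centred time difference $\bm{\phi}^{(n+1)}-\bm{\phi}^{(n-1)}$. First I would take the inner product $(\cdot,\cdot)_h$ over $\k$ of the interior equation \eqref{stability} against $\bm{\phi}^{(n+1)}-\bm{\phi}^{(n-1)}$. Setting $p=\bm{\phi}^{(n+1)}-\bm{\phi}^{(n)}=\tau\D_\tau^F\bm{\phi}^{(n)}$ and $q=\bm{\phi}^{(n)}-\bm{\phi}^{(n-1)}=\tau\D_\tau^F\bm{\phi}^{(n-1)}$, the temporal term collapses through $(p-q,p+q)_h=\|p\|_h^2-\|q\|_h^2$ into $\|\D_\tau^F\bm{\phi}^{(n)}\|_h^2-\|\D_\tau^F\bm{\phi}^{(n-1)}\|_h^2$. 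For the nonlocal term I would apply the discrete nonlocal Green's first identity \eqref{green}, splitting it into the bilinear part $\langle\bm{\phi}^{(n)},\bm{\phi}^{(n+1)}-\bm{\phi}^{(n-1)}\rangle_h$ and the boundary flux $(\N_\k\bm{\phi}^{(n)},\bm{\phi}^{(n+1)}-\bm{\phi}^{(n-1)})_{\k^-_\gamma}$. By symmetry of $\langle\cdot,\cdot\rangle_h$ the bilinear part equals $\langle\bm{\phi}^{(n+1)},\bm{\phi}^{(n)}\rangle_h-\langle\bm{\phi}^{(n)},\bm{\phi}^{(n-1)}\rangle_h$ and telescopes when summed over $n$, leaving a discrete energy identity that carries only the flux term and the source $(\bm{g}^{(n)},\bm{\phi}^{(n+1)}-\bm{\phi}^{(n-1)})_h$.

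The next step is to recognise the telescoped quantity $\|\D_\tau^F\bm{\phi}^{(l-1)}\|_h^2+\langle\bm{\phi}^{(l)},\bm{\phi}^{(l-1)}\rangle_h$ as a positive multiple of $\|\bm{\phi}^{(l)}\|_E^2$. The polarization identity $\langle\bm{\phi}^{(l)},\bm{\phi}^{(l-1)}\rangle_h=\tfrac14|\bm{\phi}^{(l)}+\bm{\phi}^{(l-1)}|_h^2-\tfrac{\tau^2}{4}|\D_\tau^F\bm{\phi}^{(l-1)}|_h^2$ recovers the potential part of the energy norm up to the seminorm $\tfrac{\tau^2}{4}|\D_\tau^F\bm{\phi}^{(l-1)}|_h^2$. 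Here the hypothesis $\va\ge0$ enters through the inverse-type bound $|\bm{u}|_h^2\le S\|\bm{u}\|_h^2$, which follows from $(u_\vk-u_\vm)^2\le 2(u_\vk^2+u_\vm^2)$, the symmetry $a_\vm=a_{-\vm}$, and the fact that at most $(2L+1)^d-1$ coefficients $a_\vm$ are nonzero, each bounded by $|\va|_\infty$. The CFL bound $\tau\le 2/\sqrt{S}$ then keeps $1-\tau^2S/4\ge0$, giving $\|\D_\tau^F\bm{\phi}^{(l-1)}\|_h^2+\langle\bm{\phi}^{(l)},\bm{\phi}^{(l-1)}\rangle_h\ge(1-\tau^2S/4)\|\bm{\phi}^{(l)}\|_E^2$, with $\tau_0$ supplying the strict slack that keeps the resulting constant finite.

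The main obstacle is the accumulated boundary flux $-\sum_n(\N_\k\bm{\phi}^{(n)},\bm{\phi}^{(n+1)}-\bm{\phi}^{(n-1)})_{\k^-_\gamma}$. Inserting the DtN relation \eqref{stability2} separates it into an exact-map part driven by $\V^{(n)}$ and a perturbation part driven by $\bm{g_b}$. For the $\V^{(n)}$ part I would exploit that, by the very construction of the DtD/DtN maps, $\V^{(n)}\bm{\phi}^{(n)}$ is the Neumann data produced by extending $\bm{\phi}$ into $\k^c$ so that the extension solves the exterior discrete wave equation \eqref{ExtModel}; the decay conditions \eqref{matrix1} and \eqref{infinitybc2} make this extension well defined and square-summable. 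Running the same energy computation on $\k^c$ and gluing the two half-identities through the bridging relation \eqref{green2}, the interior flux cancels against the exterior flux and is replaced by the increment of a nonnegative exterior energy together with the nonnegative interface coupling seminorm; since $\va\ge0$ makes $\L_{\delta,h}$ positive semidefinite, this contribution is dissipative and may be discarded. I expect this gluing, and the verification that the exterior energy is genuinely nonnegative, to be the most delicate point; the residual discrepancy from using the trapezoidally approximated kernel $\widetilde{\K}$ rather than $\K$ is of size $\varepsilon$ by \eqref{K_err} and is absorbed into $\tau_0$ and $C$.

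Finally I would bound the forcing by writing $\bm{\phi}^{(n+1)}-\bm{\phi}^{(n-1)}=\tau(\D_\tau^F\bm{\phi}^{(n)}+\D_\tau^F\bm{\phi}^{(n-1)})$ and applying Cauchy--Schwarz and Young's inequality, producing $C\tau\|\bm{g}^{(n)}\|_h^2$ together with kinetic terms $C\tau\|\D_\tau^F\bm{\phi}^{(n)}\|_h^2\le C\tau\|\bm{\phi}^{(n)}\|_E^2$; the perturbation $\bm{g_b}$ is handled identically, and the conversion $\|\cdot\|_{\k^-_\gamma}=h^{-d/2}\|\cdot\|_{h,\k^-_\gamma}\le h^{-d/2}\|\cdot\|_h$ accounts for the factor $h^{-d}$ multiplying $\|\bm{g_b}^{(n)}\|^2$. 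After moving the initial endpoint contributions $\|\D_\tau^F\bm{\mu}^{(0)}\|_h^2$ and $\langle\bm{\mu}^{(1)},\bm{\mu}^{(0)}\rangle_h$ to the right, a discrete Gronwall inequality valid for $\tau\le\tau_0$ removes the residual $C\tau\sum_n\|\bm{\phi}^{(n)}\|_E^2$ and yields \eqref{stabilityanalysislocal}, the $T$-dependence being absorbed into the constant $C$.
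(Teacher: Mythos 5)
Your proposal follows the paper's proof essentially step for step: the same test function $\bm{\phi}^{(n+1)}-\bm{\phi}^{(n-1)}$, the same use of the discrete Green's identity \eqref{green}, the same auxiliary exterior problem glued through \eqref{green2} to cancel the DtN flux term, the same inverse inequality with constant $S$ under $\va\ge 0$ giving the CFL restriction, and the same Cauchy--Schwarz plus discrete Gronwall finish. The only cosmetic difference is that you telescope $\left<\bm{\phi}^{(n)},\bm{\phi}^{(n+1)}\right>_h$ directly and polarize once at the endpoint, whereas the paper polarizes termwise before summing; the content is identical.
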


\begin{proof}
Taking the $L^2$-inner product between $\eqref{stability}$ and $(\bm{\phi}^{(n+1)}-\bm{\phi}^{(n-1)})$ on $\k$ yields
\begin{align}\label{Proof1}
&\left(\D_{\tau}\bm{\bm{\phi}}^{(n)},\bm{\bm{\phi}}^{(n+1)}-\bm{\bm{\phi}}^{(n-1)}\right)_h +\left( \L_{\delta,h} \bm{\phi}^{(n)},\bm{\phi}^{(n+1)}-\bm{\phi}^{(n-1)} \right)_h
=\left(\bm{g}^{(n)},\bm{\phi}^{(n+1)}-\bm{\phi}^{(n-1)}\right)_h.
\end{align}
The first term in the above equation can be written as
 \begin{eqnarray*}
 \begin{split}
 (\D_{\tau}\bm{\phi}^{(n)},\bm{\phi}^{(n+1)}-\bm{\phi}^{(n-1)})_h &= \left(\D_{\tau}^F\bm{\phi}^{(n)}-\D_{\tau}^F\bm{\phi}^{(n-1)}, \D_{\tau}^F\bm{\phi}^{(n)}+\D_{\tau}^F\bm{\phi}^{(n-1)} \right)_h\\
 &= \Vert \D_{\tau}^F\bm{\phi}^{(n)} \Vert_h^2-\Vert \D_{\tau}^F\bm{\phi}^{(n-1)} \Vert_h^2.
 \end{split}
\end{eqnarray*}
Applying the discrete nonlocal Green's first identity \eqref{green} to the second term of \eqref{Proof1}, one has
\begin{eqnarray}\label{ThA}
\begin{split}
& \left( \L_{\delta,h} \bm{\phi}^{(n)},\bm{\phi}^{(n+1)}-\bm{\phi}^{(n-1)} \right)_h \nonumber\\
=& \left< \bm{\phi}^{(n)}, \bm{\phi}^{(n+1)}-\bm{\phi}^{(n-1)} \right>_h -\left(  {\N_{\k}}\bm{\phi}^{(n)},\bm{\phi}^{(n+1)}-\bm{\phi}^{(n-1)} \right)_{\k_\gamma^-} \nonumber\\
=& \frac14\left( |\bm{\phi}^{(n+1)}+\bm{\phi}^{(n)}|_h^2-|\bm{\phi}^{(n)}+\bm{\phi}^{(n-1)}|_h^2-|\bm{\phi}^{(n+1)}-\bm{\phi}^{(n)}|_h^2+|\bm{\phi}^{(n)}-\bm{\phi}^{(n-1)}|_h^2 \right) \nonumber\\
&- \left(  {\bm{\V}^{(n)}}\bm{\phi}^{(n)},\bm{\phi}^{(n+1)}-\bm{\phi}^{(n-1)} \right)_{\k_\gamma^-}- \left( \bm{g_b}^{(n)},\bm{\phi}^{(n+1)}-\bm{\phi}^{(n-1)} \right)_{\k_\gamma^-}, \nonumber\\
\end{split}
\end{eqnarray}
where  the fact is used in the last equality that
$$a(b-c)=\frac14 \Big((a+b)^2-(a+c)^2-(a-b)^2+(a-c)^2\Big).$$
Summing index $n$ from $1$ to $l-1$ in \eqref{Proof1} and combining with initial conditions \eqref{stability3}, one obtains
\begin{align}\label{interior}
& \Vert \D_{\tau}^F\bm{\phi}^{(l-1)} \Vert_h^2-\Vert \D_{\tau}^F\bm{\mu}^{(0)} \Vert_h^2 +\frac14\left( |\bm{\phi}^{(l)}+\bm{\phi}^{(l-1)}|_h^2-|\bm{\phi}^{(l)}-\bm{\phi}^{(l-1)}|_h^2 \right) - \sum_{n=1}^{l-1}\left(   {\bm{\V}^{(n)}}\bm{\phi}^{(n)},\bm{\phi}^{(n+1)}-\bm{\phi}^{(n-1)} \right)_{\k_\gamma^-} \notag \\
&=\sum_{n=1}^{l-1}\left( \bm{g}^{(n)}, \bm{\phi}^{(n+1)}-\bm{\phi}^{(n-1)}\right)_h +\sum_{n=1}^{l-1}\left( \bm{g_b}^{(n)},\bm{\phi}^{(n+1)}-\bm{\phi}^{(n-1)} \right)_{\k_\gamma^-}.
\end{align}

To estimate the second term associated with the boundary on the left side of the above equation, we consider the following exterior problem
\begin{align}
&\D_{\tau}\tilde{\phi}_{\vk}^{(n)} +\L_{\delta,h} \tilde{\phi}_{\vk}^{(n)} = 0,&& {\vk}\in\k^{c}, n\ge1, \label{EP}\\
&   { \tilde{\phi}_{\vk}^{(n)} = \phi_{\vk}^{(n)}}, &&  \vk\in \k^-_{\gamma}, n\ge1,\\
 &\tilde{\phi}_{\vk}^{(0)}=\tilde{\phi}_{\vk}^{(1)}=0, && \vk\in\k^{c}.
\end{align}

Taking the $L^2$-inner product between \eqref{EP} and $(\bm{\tilde{\phi}}^{(n+1)}-\bm{\tilde{\phi}}^{(n-1)})$ on the domain $\k^{c}$, one has
\begin{eqnarray*}
(\D_{\tau}\bm{\bm{\tilde{\phi}}}^{(n)},\bm{\bm{\tilde{\phi}}}^{(n+1)}-\bm{\bm{\tilde{\phi}}}^{(n-1)})_{h,\k^{c}} +\left( \L_{\delta,h} \bm{\tilde{\phi}}^{(n)},\bm{\tilde{\phi}}^{(n+1)}-\bm{\tilde{\phi}}^{(n-1)} \right)_{h,\k^{c}}=0.
\end{eqnarray*}
Summing index $n$ from $1$ to $l-1$ and combining with initial conditions and \eqref{green2},
one has
\begin{align}\label{exterior}
& \| \D_{\tau}^F\bm{\tilde{\phi}}^{(l-1)} \|_{h,\k^{c}}^2+\frac14\left( |\bm{\tilde{\phi}}^{(l)}+\bm{\tilde{\phi}}^{(l-1)}|_{h,\k^{c}}^2-|\bm{\tilde{\phi}}^{(l)}-\bm{\tilde{\phi}}^{(l-1)}|_{h,\k^{c}}^2 \right) \notag\\
 &+ \frac{h^d}{4}\sum_{\vk\in\k_{\gamma}^{+}}\sum_{\vm\in\k_{\gamma}^{-}}a_{\vk-\vm}\left( \left( (\tilde{\phi}_{\vk}^{(l)}-\phi_{\vm}^{(l)})+(\tilde{\phi}_{\vk}^{(l-1)}-\phi_{\vm}^{(l-1)}) \right)^2 \right.- \left.\left( (\tilde{\phi}_{\vk}^{(l)}-\phi_{\vm}^{(l)})-(\tilde{\phi}_{\vk}^{(l-1)}-\phi_{\vm}^{(l-1)}) \right)^2 \right)  \notag \\
 =& -\sum_{n=1}^{l-1}\left(  {\bm{\V}^{(n)}}\bm{\phi}^{(n)},\bm{\phi}^{(n+1)}-\bm{\phi}^{(n-1)} \right)_{\k_{\gamma}^{-}}.
 \end{align}
Substituting the left-hand side of  \eqref{exterior} into \eqref{interior}, one obtains
\begin{align}\label{add}
& \Vert  \D_{\tau}^F\bm{\phi}^{(l-1)} \Vert_h^2+\frac14\left( |\bm{\phi}^{(l)}+\bm{\phi}^{(l-1)}|_h^2-|\bm{\phi}^{(l)}-\bm{\phi}^{(l-1)}|_h^2 \right) \notag\\
&+ \|  \D_{\tau}^F\bm{\tilde{\phi}}^{(l-1)} \|_{h,\k^c}^2+\frac14\left( |\bm{\tilde{\phi}}^{(l)}+\bm{\tilde{\phi}}^{(l-1)}|_{h,\k^c}^2-|\bm{\tilde{\phi}}^{(l)}-\bm{\tilde{\phi}}^{(l-1)}|_{h,\k^c}^2 \right)  \notag\\
 &+ \frac {h^d}{4}\sum_{\vk\in\k^{+}_\gamma}\sum_{\vm\in\k^-_\gamma}a_{\vk-\vm}\left( \left( (\tilde{\phi}_{\vk}^{(l)}-\phi_{\vm}^{(l)})+(\tilde{\phi}_{\vk}^{(l-1)}-\phi_{\vm}^{(l-1)}) \right)^2 \right.\notag \\
 &- \left.\left( (\tilde{\phi}_{\vk}^{(l)}-\phi_{\vm}^{(l)})-(\tilde{\phi}_{\vk}^{(l-1)}-\phi_{\vm}^{(l-1)}) \right)^2 \right)\notag \\
=&\Vert \D_{\tau}^F\bm{\mu}^{(0)} \Vert_h^2 +\sum_{n=1}^{l-1}\left( \bm{g}^{(n)}, \bm{\phi}^{(n+1)}-\bm{\phi}^{(n-1)}\right)_h+\sum_{n=1}^{l-1}\left( \bm{g_b}^{(n)},\bm{\phi}^{(n+1)}-\bm{\phi}^{(n-1)} \right)_{\k_\gamma^-}.
\end{align}
Note that
\begin{align}\label{1}
|\bm{\phi}^{(l)}-\bm{\phi}^{(l-1)}|_h^2 &= \frac{h^d}{2}\sum_{\vk\in\k}\sum_{\vm\in\k}a_{\vk-\vm}((\phi^{(l)}_{\vk}-\phi^{(l-1)}_{\vk})-(\phi^{(l)}_{\vm}-\phi^{(l-1)}_{\vm}))^2 \notag\\
&\le h^d\sum_{\vk\in\k}\sum_{\vm\in\k}a_{\vk-\vm}((\phi^{(l)}_{\vk}-\phi^{(l-1)}_{\vk})^2+(\phi^{(l)}_{\vm}-\phi^{(l-1)}_{\vm})^2) \notag \\
&\le S h^d\sum_{\vk\in\k}(\phi^{(l)}_{\vk}-\phi^{(l-1)}_{\vk})^2 \notag \\
&= S\tau^2 \Vert  \D_{\tau}^F\bm{\phi}^{(l-1)}\Vert_h^2,
\end{align}
where $S=2((2L+1)^d-1)|\va|_\infty$ and the property of $\va\ge0$ is used.
Similarly, one also has
\begin{eqnarray}
|\bm{\tilde{\phi}}^{(l)}-\bm{\tilde{\phi}}^{(l-1)}|_{h,\k^c}^2 \le S\tau^2\Vert  \D_{\tau}^F\bm{\tilde{\phi}}^{(l-1)}\Vert_{h,\k^c}^2 \label{2}
\end{eqnarray}
and
\begin{align}
&h^d\sum_{\vk\in\k^{+}_\gamma}\sum_{\vm\in\k^-_\gamma}a_{\vk-\vm} \left( (\tilde{\phi}_{\vk}^{(l)}-\phi_{\vm}^{(l)})-(\tilde{\phi}_{\vk}^{(l-1)}-\phi_{\vm}^{(l-1)}) \right)^2 \nonumber\\
\le& S \tau^2 (\Vert  \D_{\tau}^F\bm{\phi}^{(l-1)}\Vert_h^2+\Vert  \D_{\tau}^F\bm{\tilde{\phi}}^{(l-1)}\Vert_{h,\k^c}^2) \label{3}.
\end{align}
Plugging \eqref{1}, \eqref{2} and \eqref{3} into \eqref{add}, one yields
\begin{eqnarray*}
\begin{split}
& (1-S\tau^2/4)\Vert  \D_{\tau}^F\bm{\phi}^{(l-1)}\Vert_h^2+\frac14 |\bm{\phi}^{(l)}+\bm{\phi}^{(l-1)}|_h^2
+ (1-S\tau^2/4 )\| \D_{\tau}^F\bm{\tilde{\phi}}^{(l-1)} \|_{h,\k^c}^2 \nonumber \\
&+ \frac14 |\bm{\tilde{\phi}}^{(l)}+\bm{\tilde{\phi}}^{(l-1)}|_{\k^c}^2
 + \frac {h^d}{4}\sum_{\vk\in\k^{+}_\gamma}\sum_{\vm\in\k^-_\gamma}a_{\vk-\vm} \left( (\tilde{\phi}_{\vk}^{(l)}-\phi_{\vm}^{(l)})+(\tilde{\phi}_{\vk}^{(l-1)}-\phi_{\vm}^{(l-1)}) \right)^2 \nonumber\\
\le&\Vert \D_{\tau}^F\bm{\mu}^{(0)} \Vert_h^2+\sum_{n=1}^{l-1}\left( \bm{g}^{(n)}, \bm{\phi}^{(n+1)}-\bm{\phi}^{(n-1)}\right)_h+\sum_{n=1}^{l-1}\left( \bm{g_b}^{(n)},\bm{\phi}^{(n+1)}-\bm{\phi}^{(n-1)} \right)_{\k_\gamma^-}.
\end{split}
\end{eqnarray*}
If $(1-S\tau^2/4) > 0$, then there exists a positive constant $C$, s.t.,
\begin{eqnarray*}
\begin{split}
\| \bm{\phi}^{(l)} \|^2_E
\le& C\left(\Vert \D_{\tau}^F\bm{\phi}^{(0)} \Vert_h^2+ \sum_{n=1}^{l-1}\left( \bm{g}^{(n)}, \bm{\phi}^{(n+1)}-\bm{\phi}^{(n-1)}\right)_h\right.\notag\\
&+\left.\sum_{n=1}^{l-1}\left( \bm{g_b}^{(n)},\bm{\phi}^{(n+1)}-\bm{\phi}^{(n-1)} \right)_{\k_\gamma^-} \right) \\
=& C\Vert \D_{\tau}^F\bm{\phi}^{(0)} \Vert_h^2+C \tau \sum_{n=1}^{l-1}\left( \bm{g}^{(n)}, \D_{\tau}^F \bm{\phi}^{(n)}+\D_{\tau}^F\bm{\phi}^{(n-1)}\right)_h \\
&+
C \tau \sum_{n=1}^{l-1}\left( \bm{g_b}^{(n)}, \D_{\tau}^F \bm{\phi}^{(n)}+\D_{\tau}^F\bm{\phi}^{(n-1)}\right)_{\k_\gamma^-} \\
\le& C\Vert \D_{\tau}^F\bm{\mu}^{(0)} \Vert_h^2+C\tau \sum_{n=1}^{l-1}\| \bm{g}^{(n)} \|_h^2+C\tau h^{-d}\sum_{n=1}^{l-1}\| \bm{g_b}^{(n)} \|^2 \\
&+C\tau \sum_{n=0}^{l-1}\|  \D_{\tau}^F\bm{\phi}^{(n)} \|_h^2.
\end{split}
\end{eqnarray*}
Applying the discrete Gronwall's inequality \citep[see][]{quarteroni1994numerical} for positive constant $\tau_0$ such that $\tau\le\tau_0$, one obtains \eqref{stabilityanalysislocal}.

\end{proof}

\subsection{Convergence analysis}
We now analyze the error of numerical scheme \eqref{FinalScheme} based on the above stability analysis. Let $\vu_{*}^{(n)}=\{u(x_{\vk},t_n)\}_{\vk\in\k}$ be
the vector whose entries are the nodal values of exact solutions of problem \eqref{NonModel} at the time $t_n$, and $\vu^{(n)}=\{u_{\vk}^{(n)}\}_{\vk\in\k}$ whose entries are the nodal values of solutions of the numerical scheme \eqref{FinalScheme}.
Denote the error by $\bm{\phi}^{(n)}=\vu_*^{(n)}-\vu^{(n)}$.
To perform the error estimate, we further introduce $\tilde{\vu}^{(n)}=\{\tilde{u}_{\vk}^{(n)}\}_{\vk\in\k}$ whose entries are the nodal values of numerical solutions of scheme \eqref{FinalScheme} with replacing the approximate $\widetilde{\K}$ by the exact $\K$. Then the error $\bm{\phi}^{(n)}$ can be divided into two parts, i.e.,
\begin{eqnarray*}
\bm{\phi}^{(n)}=(\vu_*^{(n)}-\tilde{\vu}^{(n)})+(\tilde{\vu}^{(n)}-\vu^{(n)}):=\bm{\phi}^{1,(n)}+\bm{\phi}^{2,(n)}.
\end{eqnarray*}

We now consider these two errors separately.
Note that the solution $\tilde{\vu}^{(n)}$
 is the same as the solution of fully discrete system \eqref{DisModel} confined on the computational domain since the discrete ABCs are exact.
Consequently, the error $\bm{\phi}^{1,(n)}$ only results from the approximation error of the fully discrete scheme \eqref{DisModel} to the original problem \eqref{NonModel}. Using the Taylor expansion, one immediately has
\beq
\|\bm{\phi}^{1,(n)}\|_E \le C_1(\tau^2+h^q),\quad 2\le n\le N,
\eeq
where $C_1$ is a positive constant, $q$ is given in Lemma \ref{trun_err}.


On the other hand, one can verify $\bm{\phi}^{2,(n)}$ satisfies Eqs. \eqref{stability}-\eqref{stability3}
with
$$\vg^{(n)} = \bm{0},$$
and
\begin{eqnarray} \label{ges}
g_{b,\vk}^{(n)} =h^d\sum_{\vm\in\k_{\gamma}^+}a_{\vk-\vm}\sum_{\vl\in\k_{\gamma}^-}(\K_{\vm,\vl}^{(n)}-\widetilde{\K}_{\vm,\vl}^{(n)})*u_{\vl}^{(n)},\quad \vk\in\k_{\gamma}^-.
\end{eqnarray}
According to the stability analysis in Theorem \ref{Th1}, we have
\beq
\| \bm{\phi}^{(l),2}\|_E \le \left(C\tau\sum_{n=1}^{l-1} h^{-d}\|\bm{g_b}^{(n)}\|^2 \right)^{\frac12},
\eeq
where $\|\bm{g_b}^{(n)}\|$ can be further estimated from \eqref{ges} and \eqref{K_err} by
\begin{equation} \label{ges1}
 \|\bm{g_b}^{(n)}\| \leq  nh^{d}L^{2d+\frac12}|\bm{a}|_\infty |\tilde{\mathcal{K}}-\mathcal{K}|_\infty |\vu|_{[0,t_n]\times\k_{\gamma}^-}\le C nh^dL^{2d+\frac12}\varepsilon |\bm{a}|_\infty.
\end{equation}
Since the nonlocal horizon $\delta$ is fixed, we substitute $L=\delta/h$ into \eqref{ges1} to have
\begin{equation} \label{Ge}
 \|\bm{g_b}^{(n)}\| \le C n \delta^{2d+\frac12} h^{-d-\frac12}\varepsilon |\bm{a}|_\infty.
\end{equation}
The maximum norm of $\va$ depends on the kernels used in the nonlocal operator $\L_{\delta}$.
We here list three popularly and widely used kernel functions as
\begin{align}
&\text{constant kernel:}\; \gamma(\bm{\alpha})=\frac{3}{d}\delta^{-2-d},~|\bm{\alpha}|_\infty\in [0,\delta]; \label{kernel1} \\
&\text{nonintegrable kernel:}\; \gamma(\bm{\alpha}) = 2\|\bm{\alpha}\|^{-1}\delta^{-2},~|\bm{\alpha}|_\infty\in (0,\delta];  \label{kernel2} \\
&\text{fractional Laplacian kernel:}\; \gamma(\bm{\alpha}) =  \frac{2^{2\nu}\nu \Gamma(\nu+d/2)}{\pi^{1/2}\Gamma(1-\nu)} \|\bm{\alpha}\|^{-d-2\nu}(0<\nu <1), ~|\bm{\alpha}|_\infty\in (0,\delta]. \label{kernel3}
\end{align}
The scheme \eqref{coe} with any $p$ leads to $|\bm{a}|_\infty = \O(h^{d})$ for constant kernel \eqref{kernel1}. Similarly, one has $|\bm{a}|_\infty = \O(1)$ for kernel \eqref{kernel2} and $|\bm{a}|_\infty = \O(h^{-2\nu})$ for kernel \eqref{kernel3}.
Without loss of generality, we assume for convenience $|\bm{a}|_\infty=\O(h^{-r})$, where the index $r$ is determined by the kernel and the dimension of space.

To ensure that $\| \bm{\phi}^{(l),2}\|_E$ has the second-order accuracy in time, one can take $P$ large enough in \eqref{discreteCoeApprox} such that $\varepsilon  = \mathcal{O}(\tau^3h^{\frac{3d}{2}+\frac12+r})$.
 Then the total error $\bm{\phi}^{(n)}$ has the following estimate
\beq
\| \bm{\phi}^{(n)}\|_E
\le \| \bm{\phi}^{1,(n)}\|_E+\| \bm{\phi}^{2,(n)}\|_E \le C_1(\tau^2+h^p)+C_2\tau^2.
\eeq

Overall, we obtain the following error estimate of numerical scheme \eqref{FinalScheme}.
\begin{theorem}\label{Error}
Assume that the solution of the nonlocal wave equation $\eqref{NonModel}$ is sufficiently smooth. If $\tau\le \min \{ \tau_0,2/\sqrt{S}\}$ ($S$ is defined in Theorem \ref{Th1}) and taking $P$ large enough in \eqref{discreteCoeApprox} such that $\varepsilon =\mathcal{O}(\tau^3h^{\frac{3d}{2}+\frac12+r})$, then the following estimate holds
\begin{eqnarray}
\max_{2\leq n\leq N} \Vert \bm{\phi}^{(n)} \Vert_E \leq C(\tau^2+h^q),
\end{eqnarray}
where $q$ depends on the accuracy of the spatially discrete scheme.
\end{theorem}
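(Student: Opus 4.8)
The plan is to bound the global error $\bm{\phi}^{(n)}=\vu_*^{(n)}-\vu^{(n)}$ by splitting it as $\bm{\phi}^{(n)}=\bm{\phi}^{1,(n)}+\bm{\phi}^{2,(n)}$ with $\bm{\phi}^{1,(n)}=\vu_*^{(n)}-\tilde{\vu}^{(n)}$ and $\bm{\phi}^{2,(n)}=\tilde{\vu}^{(n)}-\vu^{(n)}$, where $\tilde{\vu}^{(n)}$ is the numerical solution of \eqref{FinalScheme} computed with the exact kernel $\K$ in place of $\widetilde{\K}$. I would show that both pieces satisfy the perturbed system \eqref{stability}--\eqref{stability3}, so that the energy estimate \eqref{stabilityanalysislocal} of Theorem \ref{Th1} applies to each under the stated CFL restriction $\tau\le\min\{\tau_0,2/\sqrt{S}\}$; the final bound then follows from the triangle inequality $\|\bm{\phi}^{(n)}\|_E\le\|\bm{\phi}^{1,(n)}\|_E+\|\bm{\phi}^{2,(n)}\|_E$.

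For the consistency part $\bm{\phi}^{1,(n)}$, I would use that $\tilde{\vu}^{(n)}$ coincides with the restriction to $\k$ of the infinite fully discrete solution of \eqref{DisModel}, since the ABC built from the exact $\K$ is an exact reduction. Substituting the nodal values of the smooth solution of \eqref{NonModel} into the scheme and Taylor expanding in time produces an interior residual $\bm{g}^{(n)}=\D_{\tau}\vu_*^{(n)}+\L_{\delta,h}\vu_*^{(n)}-\bm{f}^{(n)}$ whose temporal part is $\O(\tau^2)$ and whose spatial part is $\O(h^q)$ by Lemma \ref{trun_err}. The special first step \eqref{Initial2} contributes only a higher-order initial discrepancy, so that $\|\D_{\tau}^F\bm{\mu}^{(0)}\|_h=\O(\tau^2+h^q)$, while the exactness of the reduction keeps the boundary perturbation consistent to the same order. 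Feeding these into \eqref{stabilityanalysislocal} and summing over the $\O(1/\tau)$ time levels yields $\|\bm{\phi}^{1,(n)}\|_E\le C_1(\tau^2+h^q)$.

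For the kernel-truncation part $\bm{\phi}^{2,(n)}$, the interior residual vanishes, $\bm{g}^{(n)}=\bm{0}$, and the only perturbation is the boundary term $g_{b,\vk}^{(n)}$ of \eqref{ges}, which measures the gap between the exact and the trapezoidally approximated DtD kernels. I would estimate it using the contour-quadrature bound \eqref{K_err}, counting the $\O(L^{2d})$ interactions across the layers $\k_{\gamma}^{\pm}$ and inserting $L=\delta/h$ together with the kernel-dependent growth $|\va|_\infty=\O(h^{-r})$ to reach the sharpened estimates \eqref{ges1}--\eqref{Ge}. Choosing $P$ large enough in \eqref{discreteCoeApprox} so that $\varepsilon=\O(\tau^3h^{\frac{3d}{2}+\frac12+r})$ then cancels these negative powers of $h$ inside the stability weight $h^{-d}\|\bm{g_b}^{(n)}\|^2$, leaving $\|\bm{\phi}^{2,(n)}\|_E\le C_2\tau^2$.

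The main obstacle is the careful bookkeeping of the negative powers of $h$ in the boundary contribution of $\bm{\phi}^{2,(n)}$: because the stability estimate weights $\|\bm{g_b}^{(n)}\|^2$ by $h^{-d}$, while $\|\bm{g_b}^{(n)}\|$ itself carries the factors $L^{2d+\frac12}=\O(h^{-2d-\frac12})$ and $|\va|_\infty=\O(h^{-r})$, one must fix the tolerance $\varepsilon$ with exactly the exponent $\frac{3d}{2}+\frac12+r$ so that the accumulated $\O(1/\tau)$ time summation still closes at second order in $\tau$. A secondary technical point is verifying that the DtN relation is consistent for $\bm{\phi}^{1,(n)}$, i.e.\ that the exterior truncation error inherited through the exact reduction does not degrade the $\O(\tau^2+h^q)$ rate; this can be controlled through the exterior energy identity \eqref{green2} already exploited in the proof of Theorem \ref{Th1}.
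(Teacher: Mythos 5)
Your proposal matches the paper's own argument essentially step for step: the same splitting $\bm{\phi}^{(n)}=\bm{\phi}^{1,(n)}+\bm{\phi}^{2,(n)}$ via the auxiliary solution computed with the exact kernel $\K$, the consistency-plus-stability bound $\|\bm{\phi}^{1,(n)}\|_E\le C_1(\tau^2+h^q)$ using Lemma \ref{trun_err}, and the boundary-perturbation estimate \eqref{ges}--\eqref{Ge} with the choice $\varepsilon=\mathcal{O}(\tau^3h^{\frac{3d}{2}+\frac12+r})$ closing the argument at $\mathcal{O}(\tau^2)$. The only difference is that you make explicit the routing of the consistency residual for $\bm{\phi}^{1,(n)}$ through Theorem \ref{Th1}, which the paper leaves implicit; this is a correct and slightly more careful rendering of the same proof.
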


 {
\begin{remark}
For fixed horizon parameter $\delta$, we now present a fine estimate on the time step size restriction given in Theorems \ref{Th1} and \ref{Error}, i.e., $\tau\le \min \{ \tau_0,2/\sqrt{S}\}$. Substituting $|\va|_\infty=\mathcal{O}(h^{-r})$ and $L=\delta/h$ into $S$, one has $\tau \le h^{(d+r)/2}$, which implies the time step restriction for different kernels as
\beq\label{time_restriction}
\tau\le
\left\{
\begin{array}{cc}
h^d,& \text{kernel \eqref{kernel1}},\\
h^{d/2},& \text{kernel \eqref{kernel2}}, \\
h^{d/2+\nu}, &\text{kernel \eqref{kernel3}}.
\end{array}
\right.
\eeq
\end{remark}
}

\section{Numerical Experiments}
We now provide two numerical examples to demonstrate the effectiveness of  our ABCs and the theoretical analysis. 
Let $\bm{u}_{ref}$ and $\bm{u}_h$ be the solutions of problem \eqref{NonModel} and numerical scheme \eqref{FinalScheme}, respectively. The $L^2$-error and convergence rate are defined as
\begin{eqnarray}
&& L^2\text{-error}(h) = \|\bm{u}_h-\bm{u}_{ref}\|_h,\\
&& L^2\text{-rate} = \log\left(\frac{L^2\text{-error}(h_1)}{L^2\text{-error}(h_2)}\right) / \log\left(\frac{h_1}{h_2}\right).
\end{eqnarray}

\begin{example}\label{ex1}
 We here consider 1D problem \eqref{NonModel} with $f(x,t)=0$. The initial values are given as
\beqs
&&\varphi(x)=\exp(-25(x-0.2)^2)+\exp(-25(x+0.2)^2),\\
&&\psi(x)=50x\exp(-25x^2).
\eeqs
We consider all three kernel functions \eqref{kernel1}-\eqref{kernel3} listed in section 4. For the convenience of exposition, we denote them by kernel-1, kernel-2 and kernel-3. And we choose $\nu=0.5$ in kernel-3.
In simulations, we set the computational domain $\Omega = (-2,2)$, the spatial mesh size $h=2^{-7}$, the time step size $\tau=2^{-8}$ {and the number of quadrature nodes given in \eqref{discreteCoeApprox} $P=20000$.} And the final time are $T=3,5,10$ for three kernels, respectively.
 Figure \ref{ex1sol} plots the evolutions of numerical solutions with the linear Lagrange interpolation when $\delta=0.25, 0.5$. One can see that the waves are effectively absorbed when they touch the boundaries, and no reflected wave is generated at boundaries to disrupt the solutions in the computational domain.
\begin{figure}[t!]
\centering
\begin{minipage}[]{\textwidth}
	\includegraphics[width=0.45\textwidth]{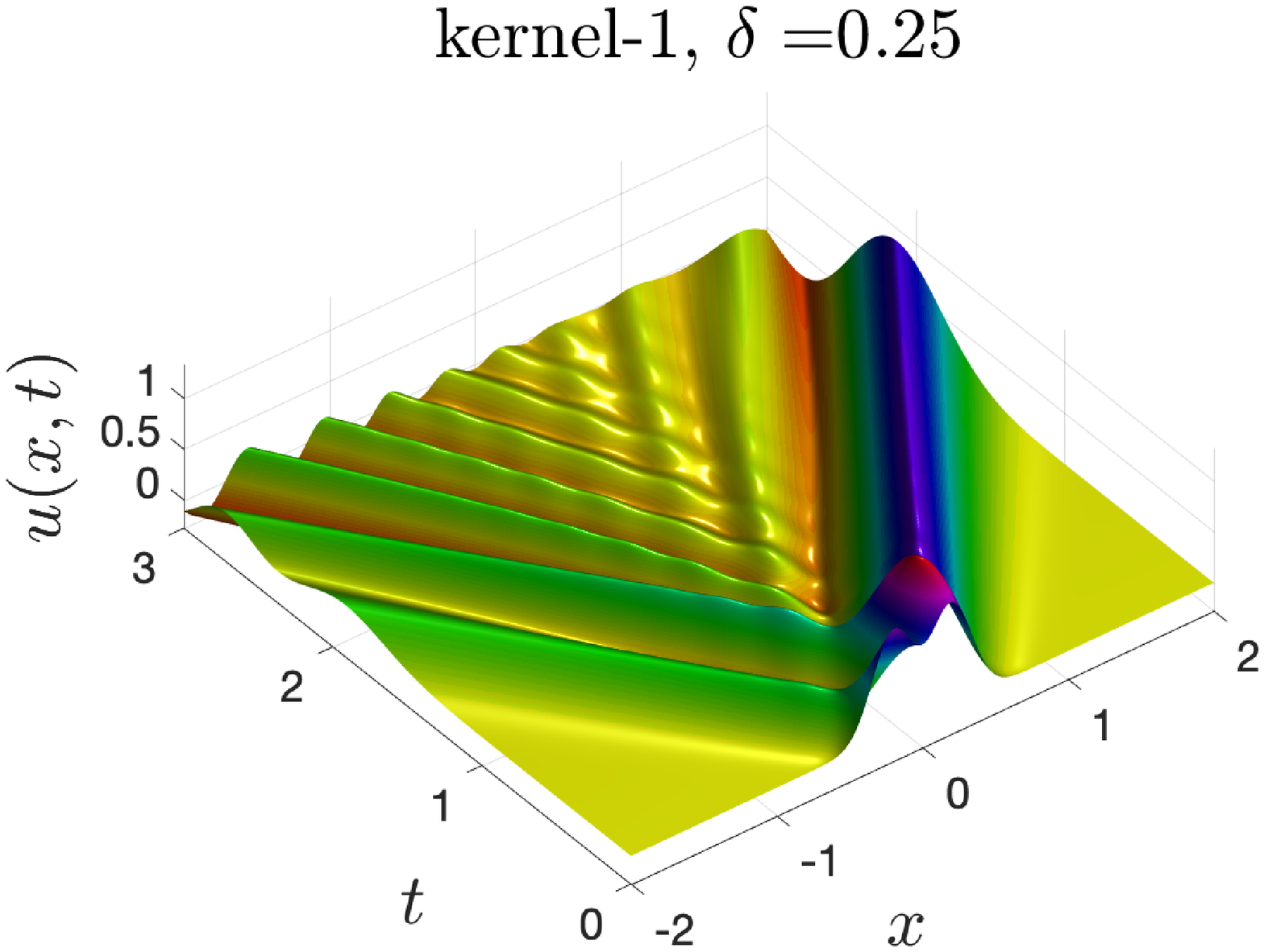}\hspace{0.2in}
	\includegraphics[width=0.45\textwidth]{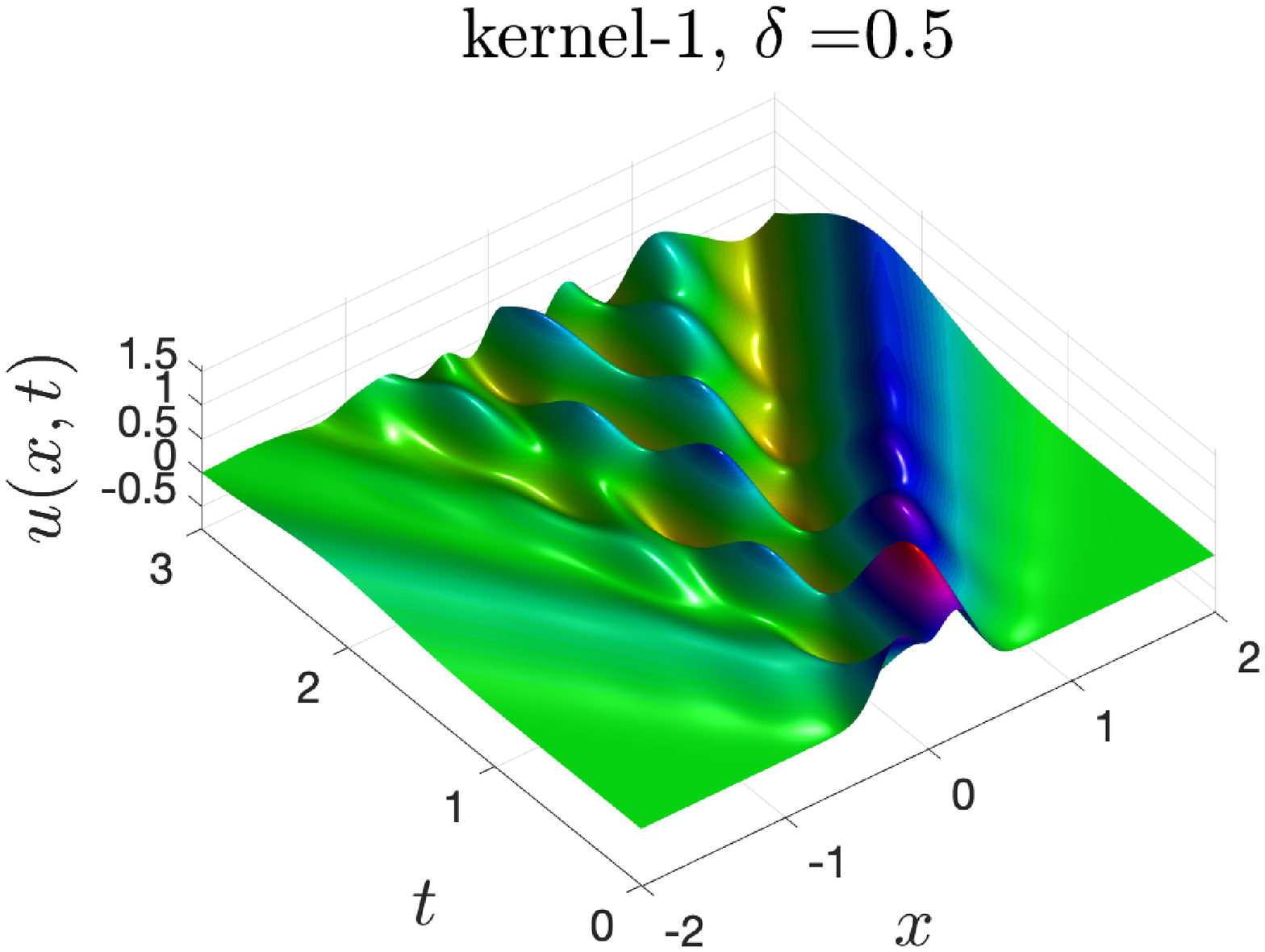}\vspace{0.1in} \\
	\includegraphics[width=0.46\textwidth]{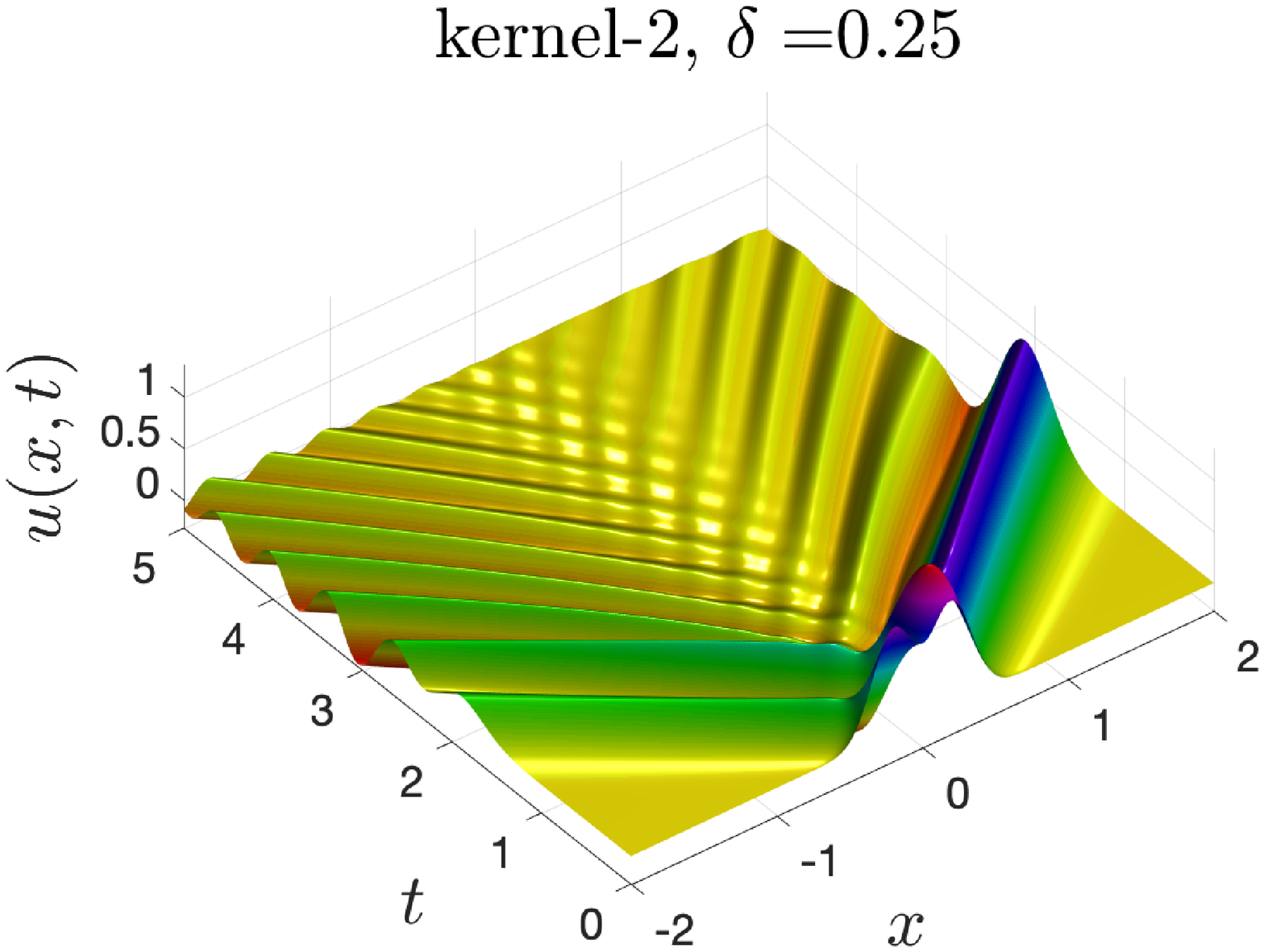}\hspace{0.2in}
	\includegraphics[width=0.46\textwidth]{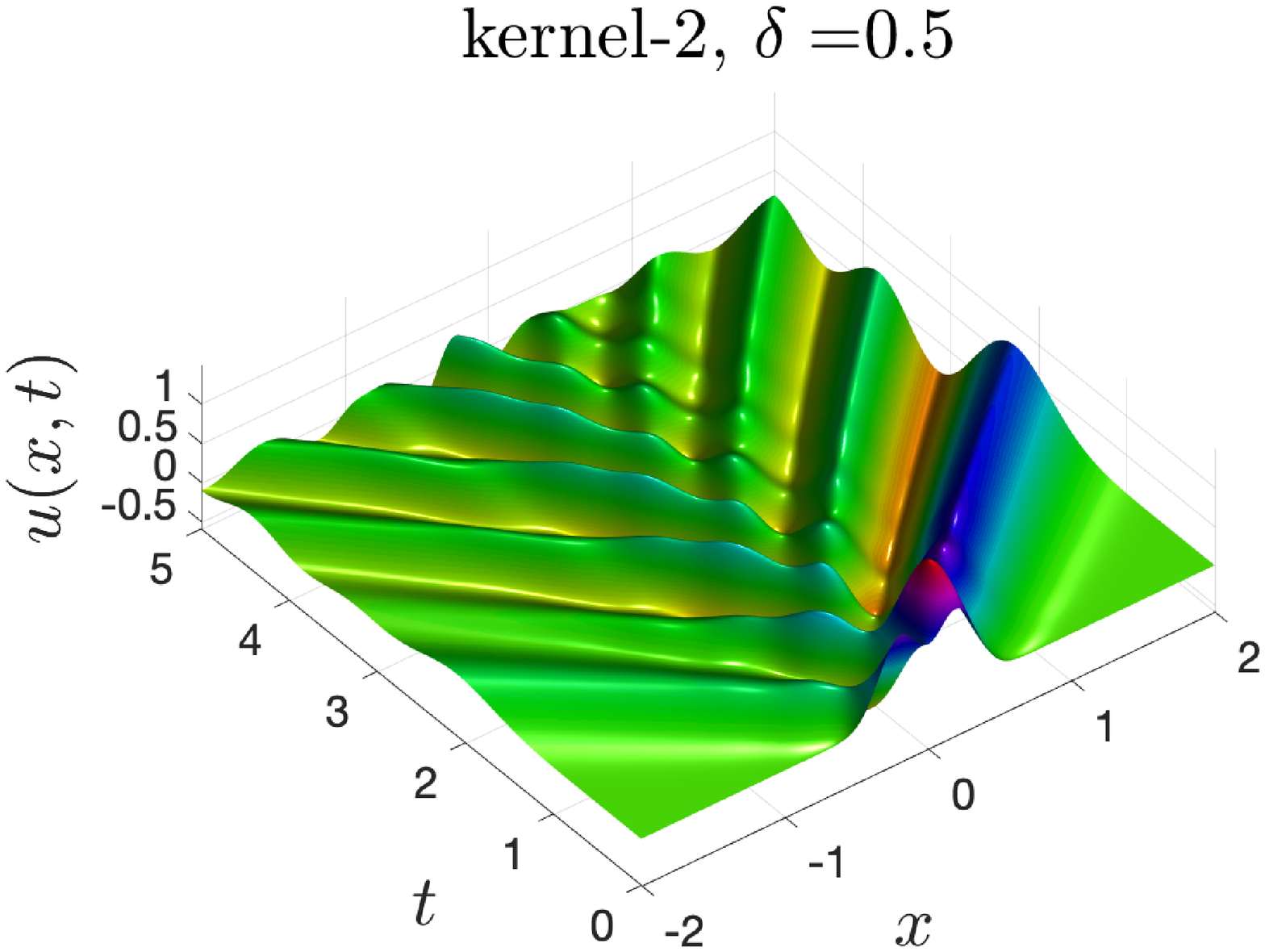}\vspace{0.1in}\\
	\includegraphics[width=0.46\textwidth]{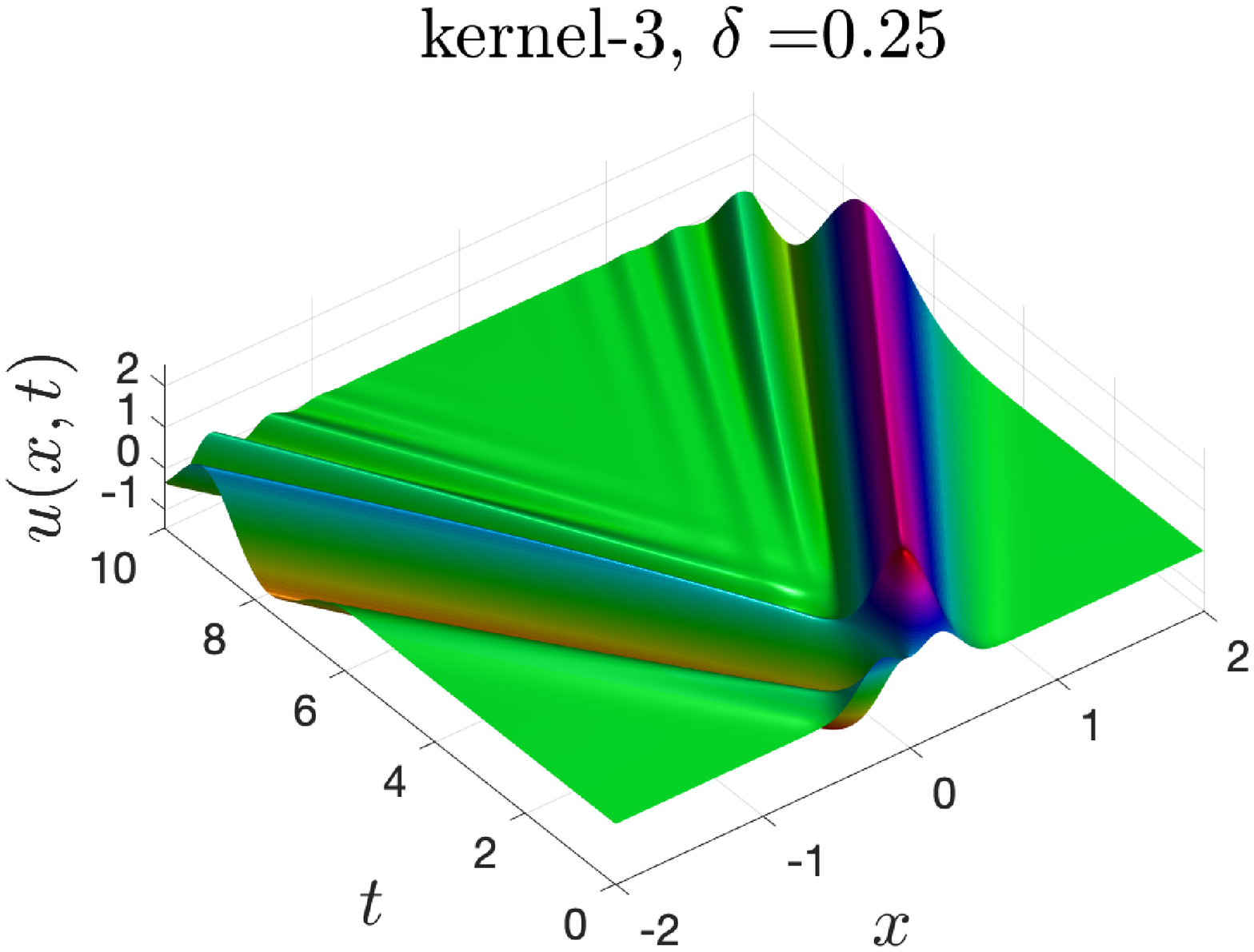}\hspace{0.2in}
	\includegraphics[width=0.46\textwidth]{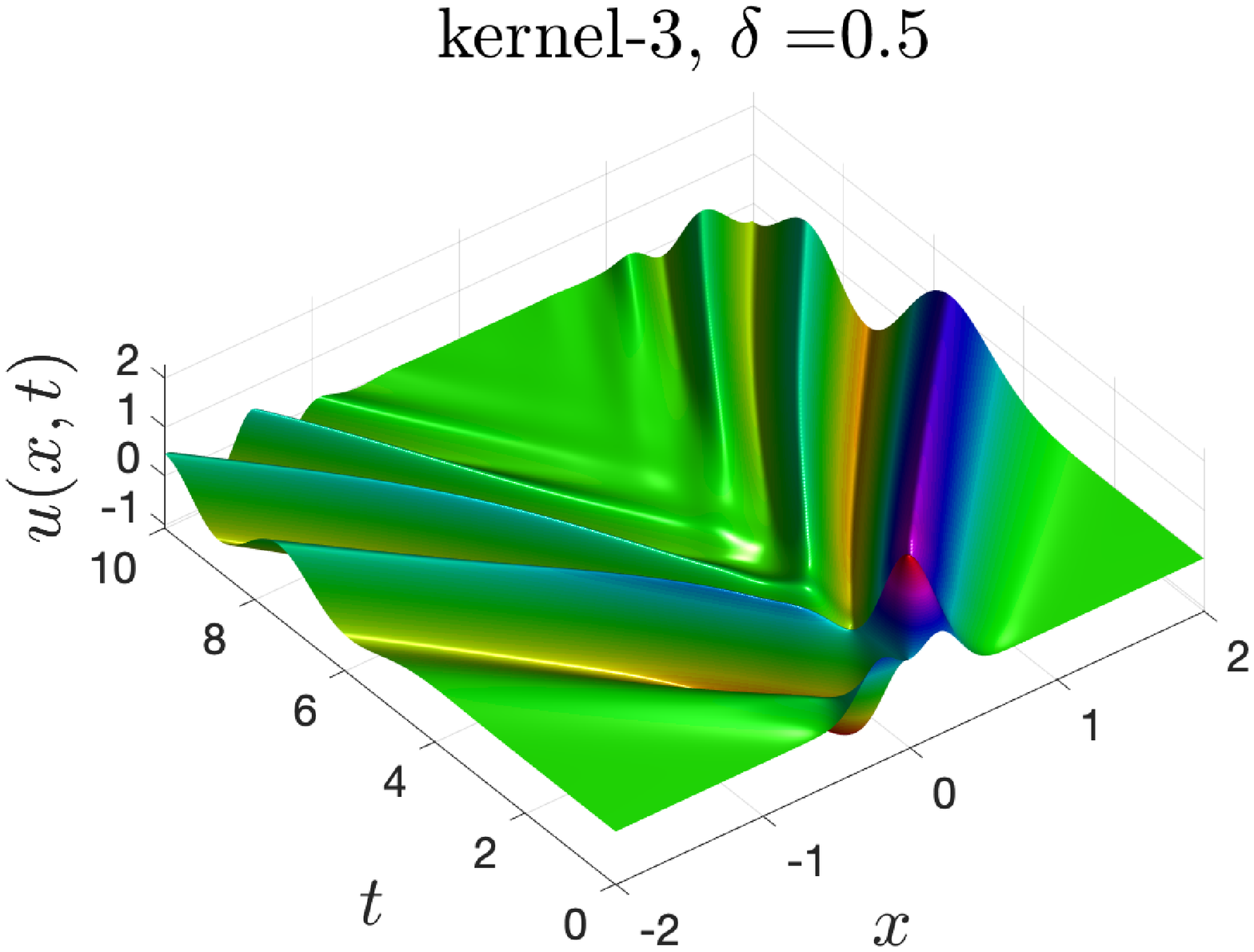}
\end{minipage}
\caption{(Example \ref{ex1}:) Evolution of numerical solutions.}
\label{ex1sol}
\end{figure}

To investigate the spatial convergence orders of various approximations such as linear, quadratic and cubic Lagrange interpolations, we set $\delta =1/8$, $T=2$ and fix $\tau=10^{-5}$, {$P=80000$}. The $L^2$-errors and convergence rates are shown in Figure \ref{error1d} by taking ${h}=[2^{-4},2^{-5},2^{-6},2^{-7}]$ for linear and quadratic cases, and $h=[1/24,1/48,1/72,1/96]$ for cubic case.
Here the ``exact" solutions are computed by pseudo-spectral method over a domain large enough as reference solutions. One can observe that linear interpolation scheme has the second-order convergence rate by comparing it with the second-order slope for all three kernels. And quadratic, cubic Lagrange interpolations have the forth-order convergence rate, expect in a special case where the quadratic interpolation scheme is used to solve the problem \eqref{NonModel} with the kernel-3. This is caused by the singularity of the kernel-3.  {We remark that the used time steps in all simulations satisfy the restriction given in \eqref{time_restriction}, but this restriction is not sharp, which can be relaxed in the future. }


\begin{figure}[t!]
\centering
\includegraphics[width=0.7\textwidth]{}
\caption{(Example \ref{ex1}:) Convergence rates of different numerical schemes and kernels.}
\label{error1d}
\end{figure}

\end{example}

\begin{example}\label{ex2}
In this example, we consider the two-dimensional problem \eqref{NonModel} with $f(\vx,t)=0$ and the initial values given as
\beqs
&&\varphi(\vx)=\exp(-25(\vx-0.2)^2)+\exp(-25(\vx+0.2)^2),\\
&&\psi(\vx)=\bm{0}.
\eeqs
 We choose the constant kernel function \eqref{kernel1} for $d=2$ and the Gaussian kernel
 $$\gamma(\bm{\alpha})=50\exp(-5\|\bm{\alpha}\|^2),\quad |\bm{\alpha}|_\infty\le\delta.$$
In the simulations, we take the computational domain $\Omega = (-1,1)^2$, $\delta=0.5$, $h=2^{-7}$, $\tau=10^{-3}$, and {$P=5000$}. Figure \ref{ex2solnon} shows the isolines of numerical solutions of scheme \eqref{FinalScheme} with the bilinear interpolation at times $T=0.1, 0.5, 1$, respectively. There is no obvious reflection caused by the boundary conditions for both two kernels. To show the error of the numerical solutions, we use the same strategy as that in Example \ref{ex1} to compute the reference solutions.
 {Figure \ref{error2d} shows the second-order and fourth-order convergence order in $L^2$-error by refining $h=[1/4,1/8,1/12,1/16,1/20]$, $\tau=[1/16,1/24,1/32,1/40,1/48]$ and $\tau=[1/16,1/36,1/64,1/100,1/144]$ for linear and quadratic interpolation cases, respectively, and taking the number of quadrature nodes as $P=[500,1000,2000,4000,5000]$. The convergence orders are consistent with the theoretical analysis.  }
 \begin{figure}[t!]
\centering
\begin{minipage}[]{\textwidth}
	\includegraphics[width=0.32\textwidth]{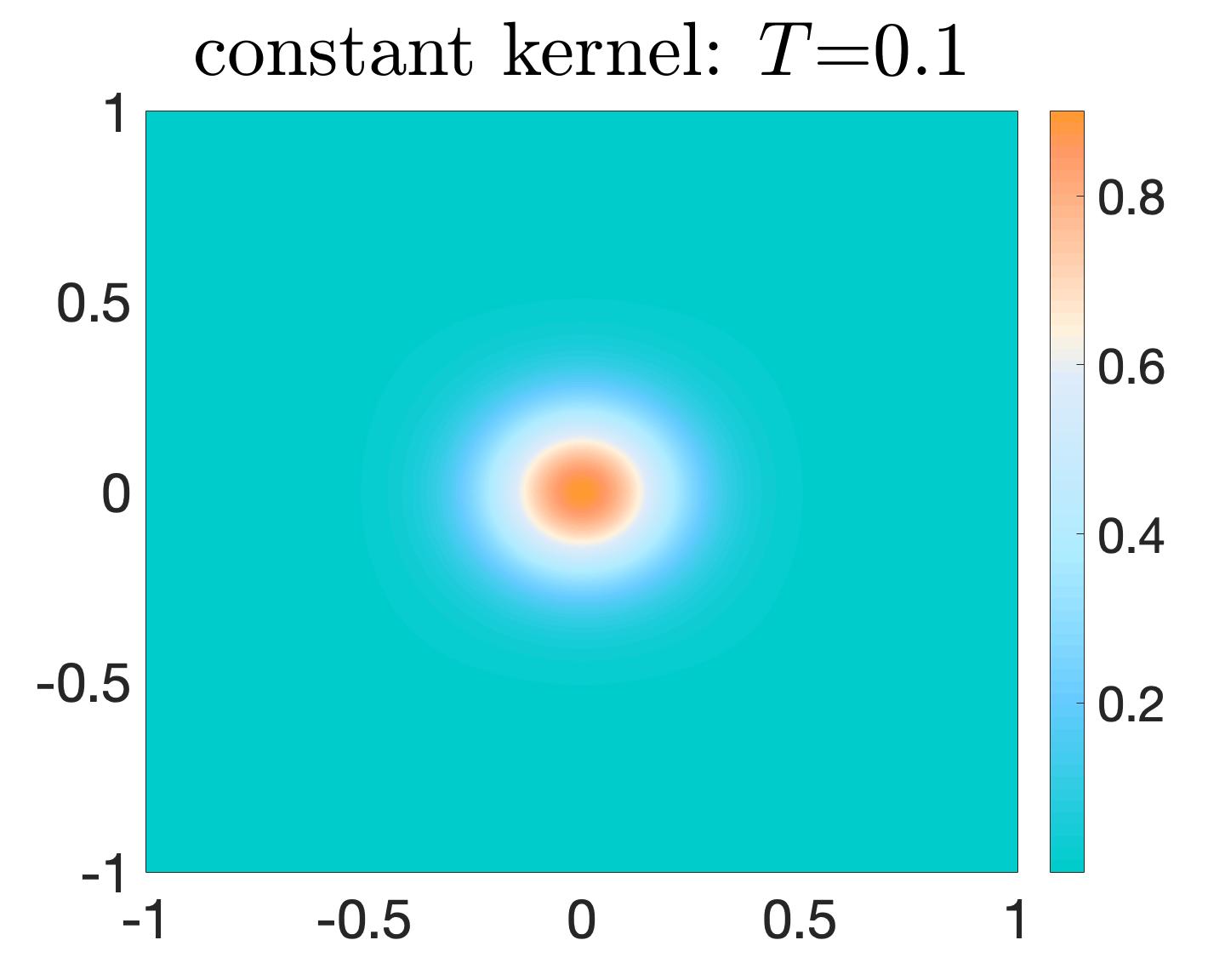}
	\includegraphics[width=0.32\textwidth]{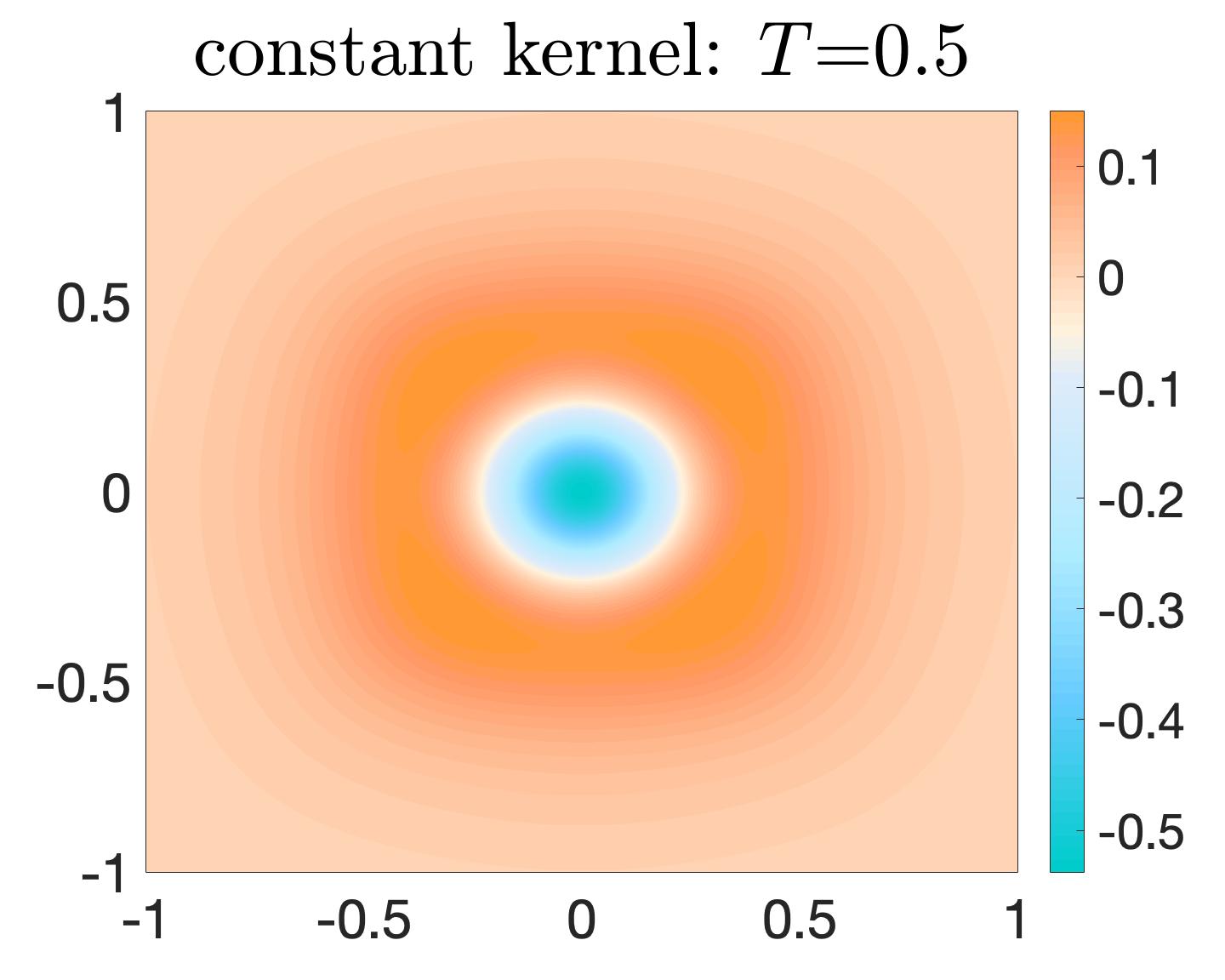}
	\includegraphics[width=0.32\textwidth]{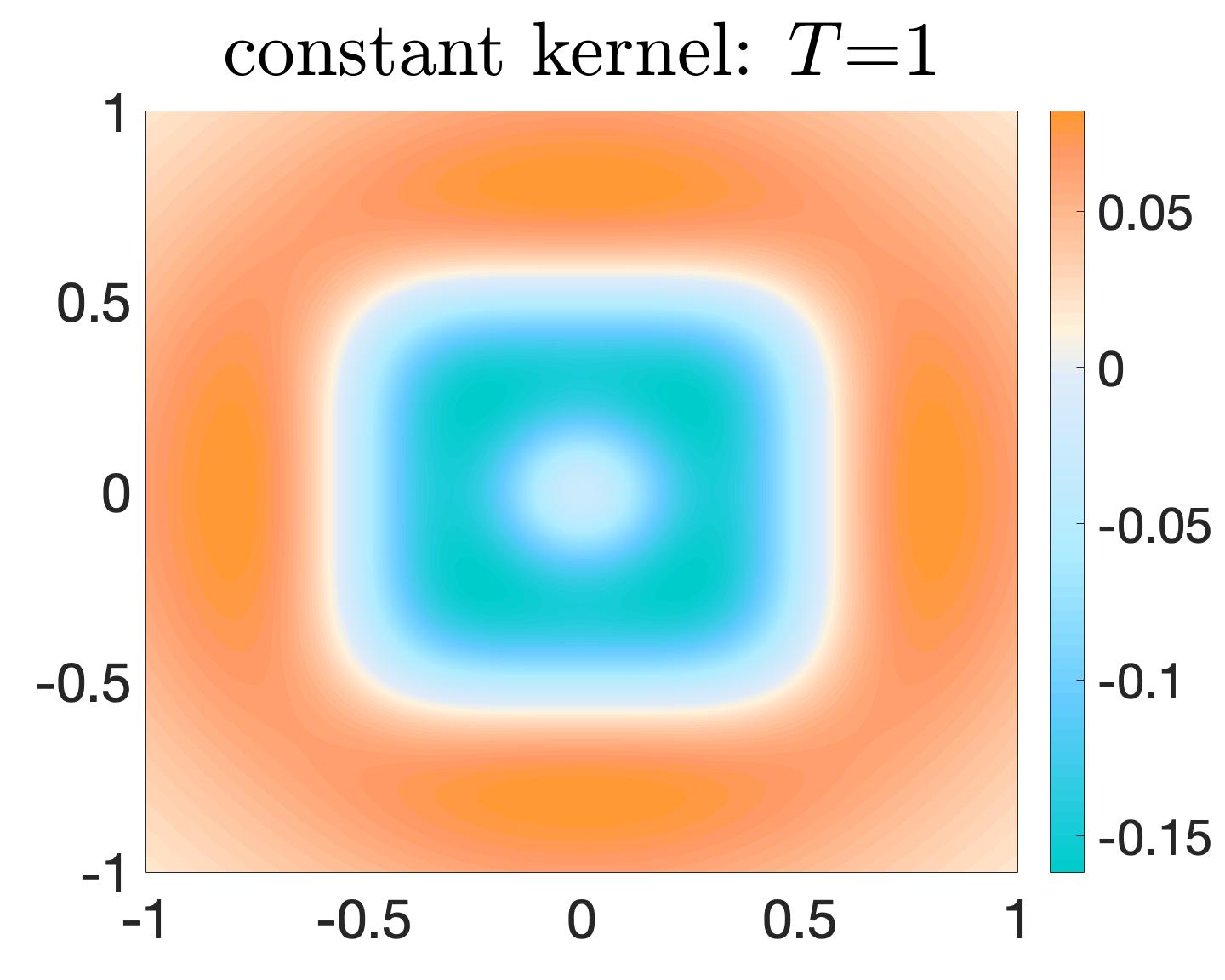}\\
	\includegraphics[width=0.32\textwidth]{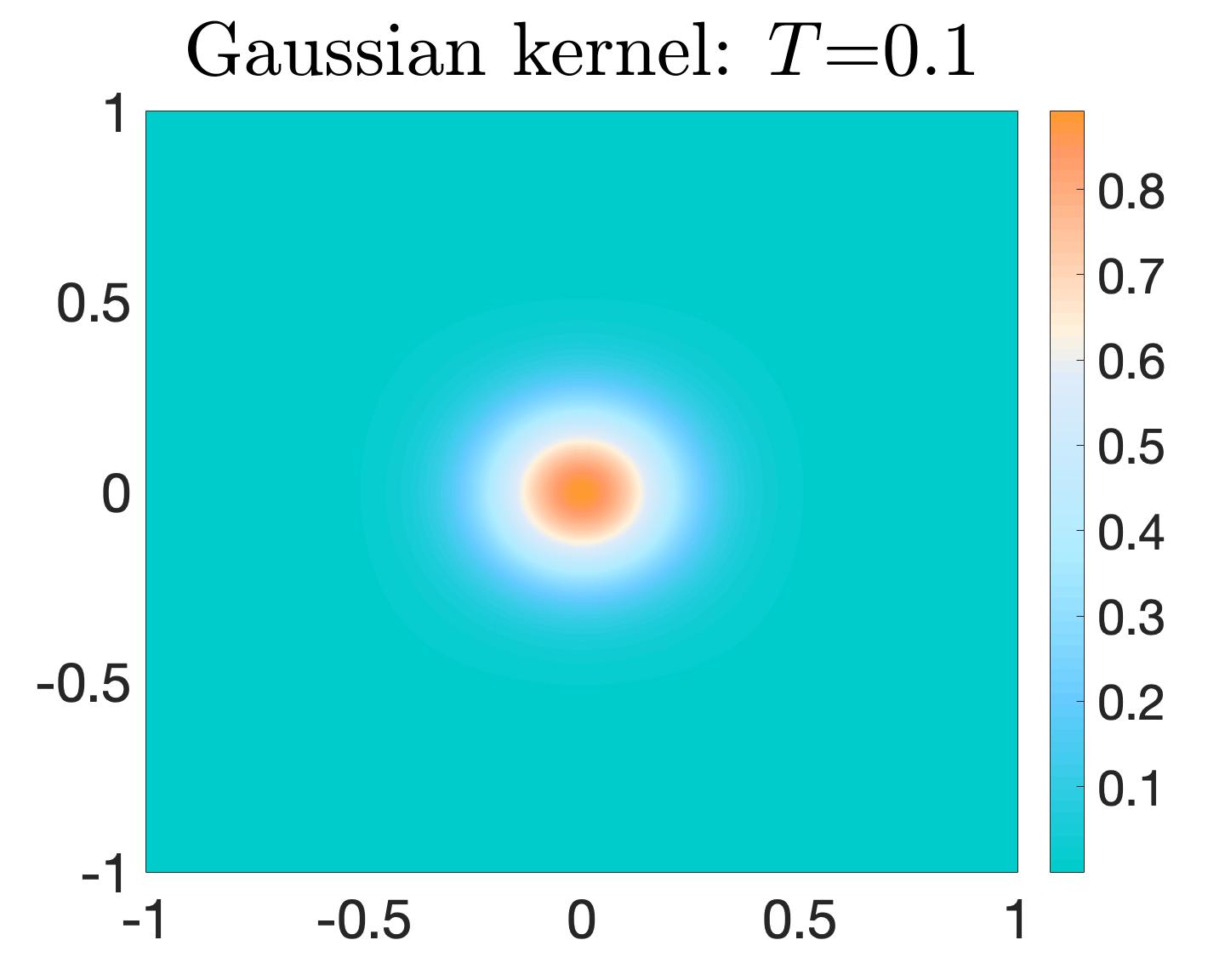}
	\includegraphics[width=0.32\textwidth]{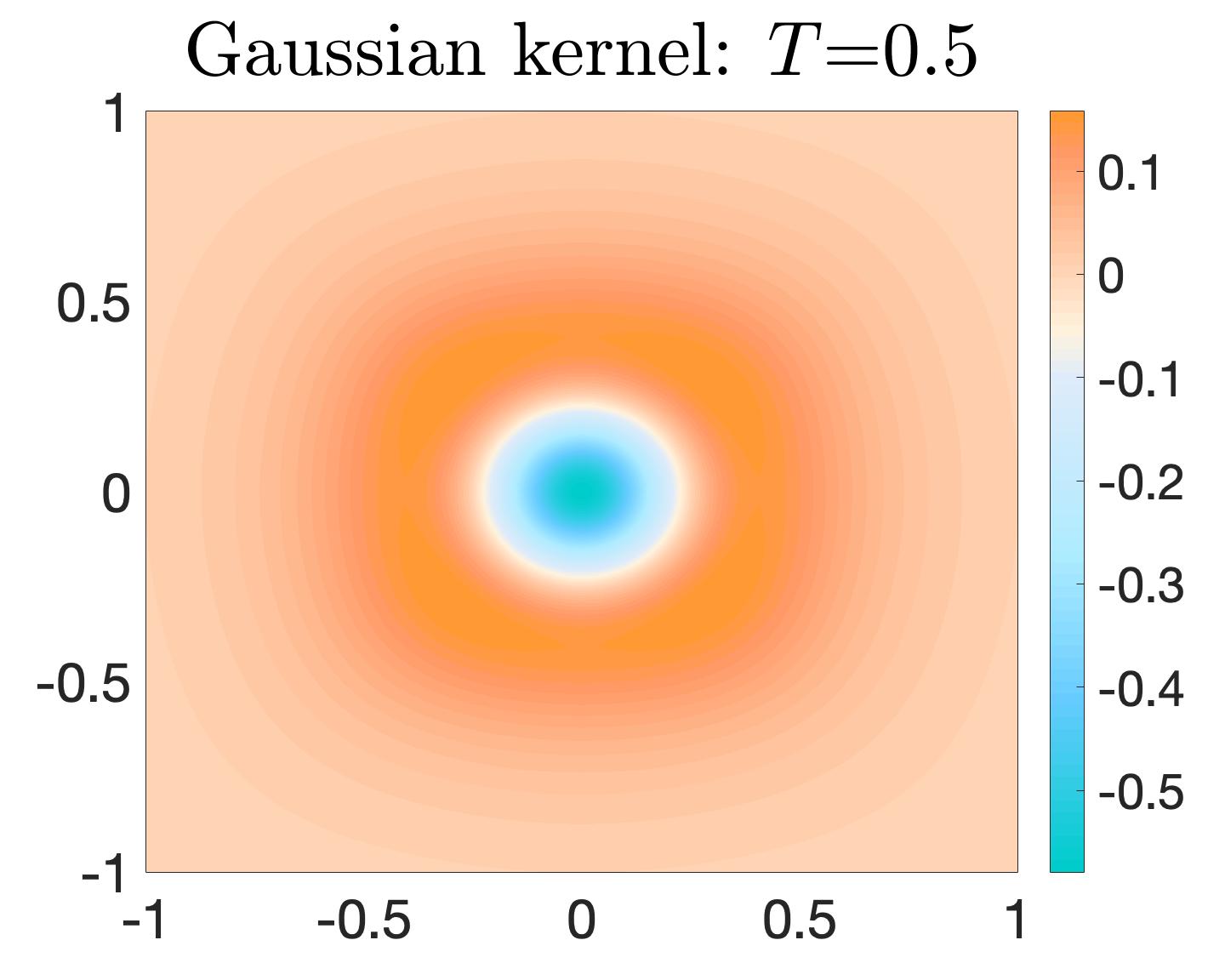}
	\includegraphics[width=0.32\textwidth]{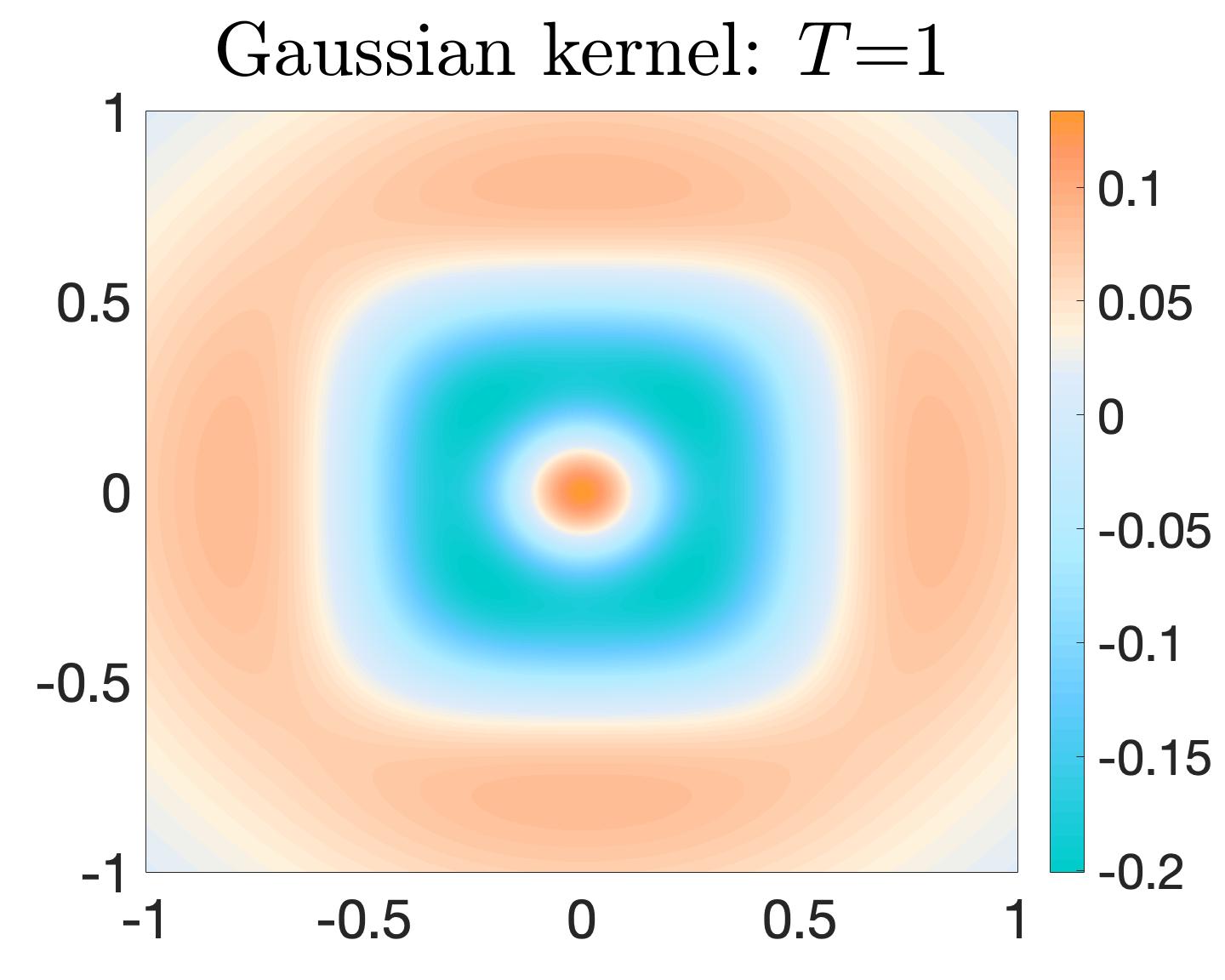}
\end{minipage}
\caption{(Example \ref{ex2}:) Isolines of numerical solutions at $T=0.1,0.5,1$.}
\label{ex2solnon}
\end{figure}

\begin{figure}[t!]
\centering
\includegraphics[width=0.7\textwidth]{}
\caption{(Example \ref{ex2}:) Convergence rates of different numerical schemes and kernels.}
\label{error2d}
\end{figure}

\end{example}

%
%
%
%

\section{Conclusion}
In this paper we considered the sharp error estimate of arbitrarily high-order schemes in space for multi-dimensional nonlocal wave equations on unbounded domains. To this end, we first approximated the nonlocal operator with arbitrarily high-order quadrature-based difference schemes, and discretized the time direction with the explicit difference scheme to have a fully discrete infinity system. After that, we used the methodology in \cite{du2018nonlocal,du2018numerical}  to achieve the DtD-type ABCs for the resulting infinity system, and further presented the formula of nonlocal Neumann data based on the discrete nonlocal Green's first identity, and finally obtained the DtN-type ABCs. The DtN-type ABCs are available to reduce the infinite system to a finite discrete system, whose solution is equivalent to that of the infinite system confined on the bounded computational domain. On the other hand, the DtN-type ABCs are also available to present the stability analysis for the reduced finite discrete system. In the practical simulation, the convolution kernel in time arose from the inverse $z$-transform can be approximated with high-order accuracy, i.e., the resulting error can be small enough such that it does not bring the loss of the optimal convergence order.   Finally, the efficiency and accuracy of our proposed approach were verified by numerical examples. And we point out that the proposed method above can be extended to solve the classical local wave problems on unbounded domains with arbitrarily high-order schemes in spatial direction.

It is well-known that the direct evaluation of the convolution kernel in \eqref{DtD0} is quite expensive.
For the local problems, there are many works on the fast evaluation of ABCs \citep[see, e.g.,][]{zheng2007approximation, jiang2004fast, arnold2003discrete, li2007numerical, sun2020fast}. While the operator $\K$ in nonlocal models is more complicated than it in local models, it is difficult to achieve a fast algorithm to the inverse $z$-transform.
Recently, \citet{zheng2020stability} have developed a fast algorithm by utilizing the discretized contour integrals developed in \cite{lopez2005fast} for solving the nonlocal heat equation on unbounded domains, but the technique is nontrivial for the wave problem.
Thus, further efforts are required to address the fast evaluation of ABCs for nonlocal wave problems.

 {
Additionally, in this work, we have achieved high-order accuracy in space,  but only have the second-order accuracy in time. It is natural to ask whether the high-order accuracy in time can be achieved. Fortunately, for the high-order scheme obtained by the modified equation technique \citep[see, e.g.,][]{shubin1987modified}, which is usually adopted to deal with the wave equations, the method of deriving ABCs in this paper seems to be applicable. However, how to analyze the stability of the scheme requires more detailed discussions. In future work, we will extend our method to high-order schemes in time.
}

\section*{Acknowledgements}

Jerry Zhijian Yang is supported by National Science Foundation of China (No. 12071362 and 11671312), the National Key Research and Development Program of China (No. 2020YFA0714200), the Natural Science Foundation of Hubei Province (No. 2019CFA007).
Jiwei Zhang is partially supported by NSFC under grant Nos. 11771035 and 12171376, 2020-JCJQ- ZD-029 and NSAF U1930402.
The numerical simulations in this work have been done on the supercomputing system in the Supercomputing Center of Wuhan University.

 {
\section*{Appendix}
{\bf The proof of Lemma \ref{trun_err}. }
First we consider the case of one-dimension. We review the domain division given in section 2
$$T^k_i=[x_{k}+((i-1)p-L)h, x_{k}+(ip-L)h], \quad i=1,2,\dots,2L/p,$$
then $\displaystyle B_\delta({x_k}) = \cup_i T^k_i$. The interpolation points in every subdomain $T^k_i$ are given as
$$
s_{i,j} = x_{k}+((i-1)p-L+j)h, \quad j=0,1,\cdots, p.
$$

For integral
$$I(f)=\int_{B_{\delta}(0)}f(s)w(s)\gamma(s)ds,$$
we consider the numerical integration for $I(f)$
\beq\label{Ih}
I_{h,p}(f)=\sum_{i}\int_{T^0_i} \mathcal{I}_{i,p}[f](s)w(s)\gamma(s)ds,
\eeq
where $\mathcal{I}_{i,p}$ represents the $p$th-degree Lagrange interpolation operator on $T^0_i$. For simplicity, we denote $T_i:=T^0_i$. According to the interpolation error of the Lagrange interpolation formula, one has
\begin{align}
\mathcal{R}[f] =I(f)- I_{h,p}(f)
=&\sum\limits_{i}\int_{T_i} (f(s)-\mathcal{I}_{i,p}[f](s))w(s)\gamma(s)ds \notag \\
=&\sum\limits_{i}\int_{T_i} \frac{f^{(p+1)}(\xi_i)}{(p+1)!}\prod\limits_{j=0}^{p}(s-s_{i,j})w(s)\gamma(s)ds,
\end{align}
where $\xi_i\in T_i$. Obviously, $\mathcal{R}[f]=0$ for polynomials with degree less than or equal to $p$. Moreover, when $p$ is even, numerical integration \eqref{Ih} is also accurate for polynomials with degree of $p+1$.
Considering $f(s)=s^{p+1}$, one has
\begin{align*}
\mathcal{R}[f]
=\sum\limits_{i}\int_{T_i}\prod\limits_{j=0}^{p}(s-s_{i,j})w(s)\gamma(s)ds.
\end{align*}
The above error is zero since the integral domain is symmetric about the origin and the integrand is an odd function.

Based on the symmetry of the kernel, the nonlocal operator \eqref{nonop} can be rewritten as
\beqs
\L_\delta u(x_k) = \frac12\int_{B_{\delta}(0)} \frac{2u(x_k)-u(x_k+s)-u(x_k-s)}{w(s)}w(s)\gamma(s)ds.
\eeqs
Denote
\beqs
G:=G(s; x_k) = \frac{2u(x_k)-u(x_k+s)-u(x_k-s)}{w(s)},
\eeqs
then the numerical scheme \eqref{a} is
\beq \label{Lh}
\L_{\delta,h} u(x_k) = \sum_{i}\int_{T_i} \mathcal{I}_{i,p}[G](s; x_k)w(s)\gamma(s)ds.
\eeq
When $p$ is odd, we construct the auxiliary polynomial with degree of $p$
$$H(s; x_k) = -2\sum_{m=1}^{(p+1)/2}\frac{s^{2m}u^{(2m)}(x_k)}{(2m)!w(s)}.$$
Let
\beqs
J(s; x_k) =G(s; x_k)-H(s; x_k).
\eeqs
According to the Taylor's expansion, one yields
\beqs
J(s; x_k) = 
\frac{-2}{w(s)}\int_0^s \frac{(s-t)^{p+2}}{(p+2)!}u^{(p+3)}(x_k+t)dt.
\eeqs
Further, we calculate the $p$th-order derivate of $J(s;x_k)$ to have
\beq\label{J_p}
|J^{(p+1)}(s;x_k)| \le C(p)|u|_\infty |s|.
\eeq

The truncation error of the approximation \eqref{a} is given as
\begin{align}
\left| \L_{\delta}u(x_k)- \L_{\delta,h}u(x_k) \right|
=& \left| \frac12\sum_i\int_{T_i}\left( G-\mathcal{I}_{i,p}[G] \right) w(s)\gamma(s)ds \right| \notag \\
=& \left| \frac12\sum_i\int_{T_i} \left((\mathcal{I}_{i,p}[H]-H)-(\mathcal{I}_{i,p}[J]-J) \right) w(s)\gamma(s)ds \right| \notag\\
\le& \frac12\sum_i\int_{T_i} \left|\mathcal{I}_{i,p}[H]-H\right| w(s)\gamma(s)ds+ \frac12\sum_i\int_{T_i} \left|\mathcal{I}_{i,p}[J]-J\right| w(s)\gamma(s)ds \notag\\
:=& E_1 +E_2,
\end{align}
where $E_1=0$ since $H$ is a polynomial of degree $p$. Next we estimate $E_2$
\begin{align}
E_2 &= \frac12\sum_i\int_{T_i}\left| \frac{J^{(p+1)}(\xi_i;x_k)}{(p+1)!}\prod\limits_{j=0}^{p}(s-s_{i,j}) \right| w(s)\gamma(s)ds \notag\\
& \le C(p)|u^{(p+3)}|_\infty \sum_i\int_{T_i}\left| \xi_i \prod\limits_{j=0}^{p}(s-s_{i,j}) \right| w(s)\gamma(s)ds \notag\\
& \le C(p)\delta|u^{(p+3)}|_{\infty} h^{p+1} \int_{B_{\delta}(0)}w(s)\gamma(s)ds.
\end{align}
Then if $u\in C_b^{p+3}(\R^d)$ and $w\gamma$ is integral on domain $B_{\delta}(0)$, the approximation error of \eqref{Lh} is $\O(h^{p+1})$ and the estimate constant $C$ is independent of $h$.

When $p$ is even, the numerical error of $\O(h^{p+2})$ can be achieved based on the fact that the numerical integration \eqref{Ih} has the $(p+1)$th-degree of exactness.
We construct the $(p+1)$th-degree polynomial $H_i(s;x_k)$ on $T_i$, which satisfies
\begin{align*}
&H_i(s_j; x_k)=G(s_{i,j},x_k),\quad j=0,1,\cdots, p; \\
&H_i'(s_{i,*};x_k)=G(s_{i,*};x_k),\quad s_{i,*}=x_{k}+((i-1/2)p-L)h ~(\text{midpoint of $T_i$}).
\end{align*}
According to the error of the Hermite interpolation formula, one has
\beqs
J_i(s;x_k):=G(s;x_k)-H_i(s; x_k)= \frac{G^{(p+2)}(\xi_i;x_k)}{(p+2)!}(s-s_{i,*})\prod_{j=0}^{p}(s-s_{i,j}), \quad \xi_i\in T_i.
\eeqs
Noting that the value of $H_i(s;x_k)$ only depends on the values of $G$ on the interpolation points, one has
\begin{align*}
\sum_i\int_{T_i}\mathcal{I}_{i,p}[G](s;x_k)w(s)\gamma(s)ds &=\sum_i\int_{T_i}\mathcal{I}_{i,p}[H_i](s;x_k)w(s)\gamma(s)ds\\
&=\sum_i\int_{T_i}H_i(s;x_k)w(s)\gamma(s)ds.
\end{align*}
Finally one yields
\begin{align}
\left| \L_{\delta}u(x_k)- \L_{\delta,h}u(x_k) \right|
=& \left| \frac12\sum_i\int_{T_i} \left( G(s;x_k)-\mathcal{I}_{i,p}[G](s;x_k) \right) w(s)\gamma(s)ds \right| \notag \\
=&\left| \frac12\sum_i\int_{T_i}\left( G(s;x_k)-H_i(s;x_k) \right) w(s)\gamma(s)ds \right| \notag \\
\le & \frac12\sum_i\int_{T_i}\left| \frac{G^{(p+2)}(\xi_i;x_k)}{(p+2)!}(s-s_{i,*})\prod\limits_{j=0}^{p}(s-s_{i,j}) \right| w(s)\gamma(s)ds \notag\\
\le&C(p)\delta h^{p+2}|u^{(p+4)}|_{\infty} \int_{B_{\delta}(0)}w(s)\gamma(s)ds.
\end{align}
This completes the proof.
The proof of the two-dimensional case is similar and we omit it here.
}

\bibliographystyle{IMANUM-BIB}
\bibliography{IMANUM-refs}


\end{document}